\documentclass[12pt,reqno]{amsart}
\usepackage{hyperref}
\usepackage[T1]{fontenc}
\usepackage[latin1]{inputenc}
\usepackage{latexsym}
\usepackage[arrow, matrix, curve]{xy}
\usepackage[dvips]{graphicx}
\usepackage{epsfig}
\usepackage{bm}
\usepackage{stmaryrd}
\usepackage{amssymb}
\usepackage{amsmath}
\usepackage{verbatim}
\usepackage{amsthm}
\usepackage{enumerate}
\usepackage{mathrsfs}
\usepackage[hmargin=3.5cm,vmargin=3.5cm]{geometry}
\setlength{\parskip}{0.1cm}

\numberwithin{equation}{section}

\newtheorem{theorem}{Theorem}[section]
\newtheorem{lemma}[theorem]{Lemma}
\newtheorem{corollary}[theorem]{Corollary}

\theoremstyle{definition}

\def\ba{{\mathbf a}}

\def\bk{{\mathbf k}}

\def\bp{{\mathbf p}}

\def\calA{{\mathcal A}}

\def\N{\mathbb N}

\def\Q{\mathbb Q}
\def\R{\mathbb R}

\def\Z{\mathbb Z}

\def\alp{{\alpha}}  
\def\bet{{\beta}}
\def\gam{{\gamma}} \def\Gam{{\Gamma}} 
\def\del{{\delta}} \def\Del{{\Delta}}
\def\eps{\varepsilon}

\def\tet{{\theta}}  \def\Tet{{\Theta}}

\def\balp{{\boldsymbol \alpha}}
\def\bbet{{\boldsymbol \beta}}

\def\d{{\partial}}

\def\le{\leqslant}
 \def\ge{\geqslant}

\def\d{{\,{\rm d}}}
\def\mmod#1{\;(\mathrm{mod}\;{#1})}

\DeclareMathOperator{\lcm}{lcm}

\DeclareMathOperator{\mes}{meas}

\def\llbracket{\lbrack\;\!\!\lbrack} \def\rrbracket{\rbrack\;\!\!\rbrack}

\begin{document}

\title[On generating functions in additive number theory, II]{On generating functions in additive number theory, II: Lower-order terms and applications to PDEs}
\author{J. Brandes}
\address{JB: Mathematical Sciences, University of Gothenburg and Chalmers Institute of Technology, 412 96 G\"oteborg, Sweden.}\email{brjulia@chalmers.se}

\author{S. T. Parsell}
\address{STP: Department of Mathematics, West Chester University, West Chester, PA 19383, USA.}\email{sparsell@wcupa.edu}

\author{C. Poulias}
\address{CP: School of Mathematics, University Walk, Clifton, Bristol BS8 1TW, UK.}\email{kp17312@bristol.ac.uk}

\author{G. Shakan}
\address{GS: Mathematical Institute, University of Oxford, Andrew Wiles Building, Radcliffe Observatory Quarter, Woodstock Road, Oxford, OX2 6GG, UK.}\email{george.shakan@gmail.com}

\author{R. C. Vaughan}
\address{RCV: Department of Mathematics, Pennsylvania State University, University Park, PA 16802, USA.}\email{rcv4@psu.edu}

%
\subjclass[2010]{11L15 (primary), 11P55, 35Q53, 35Q55(secondary)}
\keywords{Exponential sums, Circle Method, Linear Schr\"odinger Equation}

\begin{abstract}
	We obtain asymptotics for sums of the form
	$$
		\sum_{n=1}^P e(\alpha_kn^k + \alpha_1n),
	$$
	involving lower order main terms. As an application, we show that for almost all $\alp_2 \in [0,1)$ one has
	$$
		\sup_{\alp_1 \in [0,1)} \Big| \sum_{1 \le n \le P} e(\alp_1(n^3+n) + \alp_2 n^3) \Big| \ll P^{3/4 + \eps},
	$$
	and that in a suitable sense this is best possible. This allows us to improve bounds for the fractal dimension of solutions to the Schr\"odinger and Airy equations.
\end{abstract}
\maketitle


\section{Introduction}

\noindent Exponential sums are a ubiquitous tool throughout analytic number theory, and have been studied in their own right at least since the 1920s. When $\bk = (k_1, \ldots, k_t)$ is a tuple of pairwise distinct natural numbers and $P$ is a large positive integer, the Weyl sum of multidegree $\bk$ is given by
\begin{align}\label{*1.1}
	f_{\bk}(\alp_{k_1}, \ldots, \alp_{k_t}) = \sum_{n=1}^P e(\alp_{k_1} n^{k_1} + \ldots + \alp_{k_t} n^{k_t}).
\end{align}
Such sums regularly feature in applications of the Hardy-Littlewood circle method in connection with diophantine systems of the shape
\begin{align}\label{*1.2}
	x_1^{k_j} + \ldots + x_s^{k_j} = y_1^{k_j}+ \ldots + y_s^{k_j} \qquad (1 \le j \le t).
\end{align}

Whilst the theory of systems of the kind \eqref{*1.2} has recently seen significant advances in the work of Wooley \cite{W:mvmt3, W:nec} and Bourgain, Demeter and Guth \cite{BDG} on Vinogradov's mean value theorem, our grasp of the cases involving lacunary degrees remains insufficient. One of the simplest such systems is that corresponding to $\bk = (1,3)$, a variant of which is given by
\begin{align}\label{*1.3}
	x_1^{3} + \ldots + x_s^{3} =x_1 + \ldots + x_s = 0.
\end{align}
Although recent progress on this system has been achieved by Br\"udern and Robert \cite{BR} and Wooley \cite{W:15}, a full understanding of the system \eqref{*1.3} remains tantalisingly out of reach. In both papers, the authors apply the circle method in order to derive asymptotic formul{\ae} for the number of solutions of such systems, and they succeed in doing so as soon as $s \ge 10$.  On the basis of standard heuristics, one would expect to be able to extend the range in which formul{\ae} of this kind are valid to at least $s \ge 9$, but unfortunately we lack a sufficiently precise understanding of the underlying Weyl sum $f_{1,3}(\balp)$ to achieve such a bound. A similar phenomenon occurs in forthcoming work of Hughes and Wooley \cite{HW}, which deals with moments of a weighted version of $f_{1,3}(\balp)$. Trying to make some headway towards a better understanding of these exponential sums is the main motivation behind the paper at hand.

The main motif underpinning the Hardy-Littlewood method is that sums of the shape \eqref{*1.1} should be small unless all components of the coefficient vector $\balp$ lie in the vicinity of fractions with a small denominator, in which case they can be well approximated by certain generating functions that are easier to handle and encode the adelic information inherent in the associated system \eqref{*1.2}.  To make this notion precise, we introduce some notation. Suppose that the entries of $\balp$ have a rational approximation of the shape
\begin{align}\label{1.4}
	\alp_{k_j} = a_{k_j}/q + \beta_{k_j} \qquad (1 \le j \le t)
\end{align}
with a common denominator $q$ satisfying $\gcd(q, a_{k_1}, \ldots, a_{k_t})=1$, and define
\begin{align*}
	S_{\bk}(q;\ba) = \sum_{x=1}^q e\left(q^{-1}\sum_{j=1}^t a_{k_j} x^{k_j}\right) \quad \text{and} \quad I_{\bk}(\bbet) = \int_0^P e \left(\sum_{j=1}^t \bet_{k_j} x^{k_j}\right) \d x.
\end{align*}
In this notation, we anticipate that $q^{-1}S_{\bk}(q; \ba)I_{\bk}(\bbet)$ should be a good approximation to $f_{\bk}(\balp)$, and we denote the difference by
\begin{align}\label{1.5}
	\Del_{\bk}(q, \ba; \bbet) = f_{\bk}(\ba/q + \bbet) - q^{-1} S_{\bk}(q; \ba) I_{\bk}(\bbet).
\end{align}

There is a considerable body of work related to Weyl sums of the type \eqref{*1.1} and their approximations \eqref{1.5}. When $t=1$ so that $\bk = k$, it is known from \cite[Theorem~4.1]{V:HL} that
\begin{align}\label{1.6}
\Del_k(q,a;\bet) \ll q^{1/2+\eps}(1+P^k |\bet|)^{1/2},
\end{align}
and Daemen \cite[Theorem~2]{Dae} and Br\"udern and Daemen \cite[Theorem~1]{BD} showed that this bound is sharp up to at most a factor of $q^\eps$. For general multidegrees $\bk$ we have the weaker bound
\begin{align*}
	\Del_{\bk}(q, \ba; \bbet) \ll q \left(1+ \sum_{j=1}^t P^{k_j} |\bet_{k_j}| \right)
\end{align*}
from \cite[Theorem~7.2]{V:HL}. This bound has been improved for $\bk = (1,k)$ by Br\"udern and Robert \cite[Theorem~3]{BR}, who obtained the estimate
\begin{align}\label{1.7}
	\Del_{1,k}(q, \ba; \bbet) \ll q^{1-1/k+\eps}(1+P^k |\bet_k|)^{1/2}.
\end{align}
Whilst their result holds for all $k \ge 2$, an epsilon-free version is available in the quadratic case due to the last author \cite[Theorem~8]{V:09}.
This invites the question of how optimal is the bound in \eqref{1.7}.

The primary objective of this memoir is to examine the exponential sum $f_{1,k}(\alp_1, \alp_k)$ and its associated error term $\Del_{1,k}(q,\ba;\bbet)$ more closely. Our main result is the following.
\begin{theorem}\label{T1.1}
	Let $k \ge 2$. Assume \eqref{1.4} with $(q, a_k)=1$ and $|\bet_1| \le (2q)^{-1}$.  Then
	\begin{align*}
		f_{1,k}(\alp_1, \alp_k) &= q^{-1} \sideset{}{^\dag}\sum_{d|q} S_{1,k}(q; d \llbracket a_1/d \rrbracket, a_k) I_{1,k}\left(\alp_1-\frac{\llbracket a_1/d \rrbracket}{q/d}, \bet_k\right)\\
		&\qquad + O(q^{1/2+\eps}(1+|\bet_k|P^k)^{1/2}\log P),
	\end{align*}
	where $\llbracket x \rrbracket$ denotes a closest integer to $x$, and the notation $\sum^\dagger$ indicates that the sum runs over all distinct values of $d\llbracket a_1/d \rrbracket$ satisfying $(\llbracket a_1/d \rrbracket, q/d)=1$.
	
	If, moreover, we have
	\begin{align}\label{1.8}
		|\bet_k| \le (4kqP^{k-1})^{-1},
	\end{align}
	then the error term in the above asymptotic may be replaced by $O(q^{1/2 +\eps})$.
\end{theorem}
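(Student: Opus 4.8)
The plan is to decompose the linear variable according to the structure of $\alp_1 = a_1/q + \bet_1$. Write $n$ running from $1$ to $P$, and split the sum $f_{1,k}(\alp_1,\alp_k) = \sum_{n=1}^P e(\alp_1 n + \alp_k n^k)$ by residue classes modulo $q$ in the variable $n$ for the $n^k$-term while treating the linear term exactly. The key observation is that the exponential sum over a residue class $n \equiv x \mmod q$ has the linear phase $e(\alp_1 n)$, and summing $e(\alp_k n^k)$ over that class produces, after writing $n = x + qm$, a complete sum $S_{1,k}(q;a_1,a_k)$-type factor times a smooth sum in $m$ weighted by $e(\bet_1 q m)$ and $e(\bet_k(x+qm)^k)$. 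The subtlety is that the linear term $e(\bet_1 n)$ varies across the range, so one does not get a clean product; instead one obtains, via partial summation in $m$ or Poisson summation, a main term involving $I_{1,k}$ evaluated with a \emph{shifted} linear argument, where the shift records which rational $\llbracket a_1/d\rrbracket/(q/d)$ is closest to $\alp_1$. The appearance of the divisor sum $\sum^\dagger_{d\mid q}$ and the bracket notation $\llbracket a_1/d\rrbracket$ is precisely the combinatorial bookkeeping of collecting, for each divisor $d$ of $q$, the contribution where the denominator of the linear approximation effectively drops to $q/d$.

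First I would establish the identity at the level of the unconditional statement (the first display of Theorem~\ref{T1.1}), since the conditional improvement \eqref{1.8} will follow by re-examining the error terms under the extra smallness hypothesis on $\bet_k$. To do this I would start from the exact expansion of $f_{1,k}$ over residues mod $q$, apply Poisson summation (or the Euler--Maclaurin/partial summation machinery of \cite{V:HL}) to the inner sum in $m$, and identify the zero-frequency term as the candidate main term. The off-diagonal frequencies, together with the error in replacing the exponential sum by its integral, are controlled by \eqref{1.6}: indeed the error term in \eqref{1.5} for $f_k$ with the pure $k$-th power is already $\ll q^{1/2+\eps}(1+P^k|\bet_k|)^{1/2}$, and after summing over the $O(\log P)$-many relevant divisors $d$ (or frequencies) one picks up the stated $\log P$. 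The condition $|\bet_1|\le(2q)^{-1}$ guarantees that $a_1/q$ is genuinely the closest fraction with denominator $q$, so that the shifts $\llbracket a_1/d\rrbracket/(q/d)$ are unambiguously defined and the rational approximations sort cleanly; this hypothesis is what makes the divisor-sum parametrisation exhaustive and non-overlapping.

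For the conditional refinement, the point is that under \eqref{1.8} the oscillation of $e(\bet_k n^k)$ over the whole interval $[1,P]$ is bounded: $|\bet_k| P^k \le P/(4kq) $, and more pertinently the derivative $k\bet_k n^{k-1}$ stays $\le (4q)^{-1}$ in absolute value, so that $e(\bet_k n^k)$ is essentially constant on each progression of length $\asymp q$. This means the inner sum in $m$ no longer needs the full strength of \eqref{1.6}; instead one can bound the error in the Poisson/partial-summation step by the total variation of the smooth weight, which is now $O(1)$ rather than $O((1+P^k|\bet_k|)^{1/2})$, and the $\log P$ is absorbed because the surviving frequencies are confined to a bounded range. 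Concretely I would re-run the partial summation keeping explicit track of the derivative bound \eqref{1.8} and show every error contribution is $\ll q^{1/2+\eps}$.

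I expect the main obstacle to be the careful bookkeeping of the divisor sum and the bracket function: one must verify that as $x$ ranges over residues mod $q$ the induced fractions $\{a_1 x/q\}$ rearrange so that the main term genuinely collapses to $\sum^\dagger_{d\mid q}$ with the coprimality condition $(\llbracket a_1/d\rrbracket, q/d)=1$, with each value counted exactly once and with the correct Gauss-sum factor $S_{1,k}(q; d\llbracket a_1/d\rrbracket, a_k)$. This is a finite but delicate computation in elementary number theory — essentially understanding how the affine map $x \mapsto a_1 x \mmod q$ interacts with the multiplicative structure of $q$ — and getting the normalisations (the $q^{-1}$, the argument $\alp_1 - \llbracket a_1/d\rrbracket/(q/d)$ of $I_{1,k}$) exactly right is where errors are most likely to creep in. The analytic input, by contrast, is essentially a packaging of \eqref{1.6} and standard smoothing, so I would expect that part to be routine.
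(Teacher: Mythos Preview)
Your broad framework---residue classes modulo $q$ followed by a Poisson-type expansion---matches the paper's, but two of the mechanisms you describe are misidentified, and following them as written would lead you astray.

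First, the divisor sum $\sum^\dagger_{d\mid q}$ does not arise from any rearrangement of residue classes $x\bmod q$ or from the affine map $x\mapsto a_1 x$. After the Poisson step (the paper's Lemma~\ref{L3.1}) one obtains
\[
f_{1,k}(\alp_1,\alp_k;Q) = q^{-1}\sum_{|h|\le H} S(q;a_1+h,a_k)\,I(\bet_1 - h/q,\bet_k;Q) + O(q^{1/2}\log H),
\]
and the divisor structure enters by substituting $d=(a_1+h,q)$, $e=(a_1+h)/d$ in the \emph{frequency} variable $h$. The main terms are precisely those $h$ with $|h|\le d/2$, which forces $e=\llbracket a_1/d\rrbracket$; all remaining frequencies go into the error. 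The bracket and coprimality conditions thus fall out immediately from this substitution, with no delicate combinatorics needed.

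Second, the factor $\log P$ does not come from summing over $O(\log P)$ divisors or frequencies---there are only $\ll q^\eps$ divisors, already absorbed. It comes from a dyadic decomposition of $[1,P]$ into $O(\log P)$ ranges $[Q,2Q]$: the oscillatory-integral bound needed to control the non-main frequencies (Lemma~\ref{L2.5}, a van der Corput estimate) requires a dyadic interval, and one sums the resulting bound \eqref{3.1} over the dyadic pieces. This is also why the conditional statement under \eqref{1.8} drops the $\log P$: there one works on $[1,P]$ directly, because the derivative bound keeps $H\ll q$ without any case split, and the sharper Lemma~\ref{L2.4}\eqref{L2.4a} replaces Lemma~\ref{L2.5}.

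Finally, the error is not controlled via \eqref{1.6}. The non-main frequencies are handled by combining the Gauss-sum bound $S(q;de,a_k)\ll q^{1/2+\eps}d$ of Lemma~\ref{L2.1}\eqref{L2.1b} with the integral bound of Lemma~\ref{L2.5}; the factor $d$ from the Gauss sum is cancelled by the spacing $|a_1-de|\ge d/2$ in the denominator, and this is what produces the clean $q^{1/2+\eps}(1+Q^k|\bet_k|)^{1/2}$.
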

Thus, by extracting additional main terms, we are able to obtain an error term of the same quality as in \eqref{1.6}, which is essentially optimal. We note that the factor $\log P$ in the error term can be eliminated by means of a more careful analysis. Observe also that the coprimality condition $(\llbracket a_1/d \rrbracket, q/d)=1$ implies that the fractions $\llbracket a_1/d\rrbracket / (q/d)$ are reduced and therefore pairwise distinct.

In the case when $k=3$, it follows from Dirichlet's approximation theorem that every $\alp \in [0,1]$ has an approximation $\alp=a/q + \bet$ with $q(1+P^3 |\bet|) \le 2 P^{3/2}$. Thus, in the cubic case we obtain the following.
\begin{corollary}\label{C1.2}
	Assume \eqref{1.4} with $q(1+P^3 |\bet_3|) \le 2 P^{3/2}$, where $(q, a_3)=1$ and $|\bet_1| \le (2q)^{-1}$.
	\begin{enumerate}[(a)]
		\item \label{C1.2a}
		We have
		\begin{align*}
			f_{1,3}(\alp_1, \alp_3) &= q^{-1} \sideset{}{^\dag}\sum_{d|q} S_{1,3}(q; d \llbracket a_1/d \rrbracket, a_3) I_{1,3}\left(\alp_1-\frac{\llbracket a_1/d \rrbracket}{q/d}, \bet_3\right) + O(P^{3/4+\eps}).			
		\end{align*}
		\item \label{C1.2b}
		Moreover, we have the bound
		\begin{align*}
			f_{1,3}(\alp_1, \alp_3) \ll \frac{P^{1+\eps}}{(q+q|\bet_3|P^3)^{1/3}} + P^{3/4+\eps}
		\end{align*}
		uniformly in $\alp_1$.
	\end{enumerate}
\end{corollary}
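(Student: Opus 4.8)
The plan is to deduce both parts directly from Theorem~\ref{T1.1} with $k=3$. For part~\ref{C1.2a}, observe that the hypotheses of Theorem~\ref{T1.1} are precisely those assumed here: $(q,a_3)=1$ and $|\bet_1|\le(2q)^{-1}$. Applying the theorem with $k=3$ produces the stated main term, so it remains only to show that the error term $q^{1/2+\eps}(1+|\bet_3|P^3)^{1/2}\log P$ is $O(P^{3/4+\eps})$. Rewriting it as $q^{\eps}\bigl(q(1+|\bet_3|P^3)\bigr)^{1/2}\log P$ and invoking the hypothesis $q(1+P^3|\bet_3|)\le 2P^{3/2}$, together with its consequence $q\le 2P^{3/2}$, this is $\ll P^{3/4}\cdot q^{\eps}\log P\ll P^{3/4+\eps}$ once $q^{\eps}$ and $\log P$ are absorbed into $P^{\eps}$. (That such an approximation exists for every $\alp_3$ is the content of the Dirichlet-type remark preceding the corollary; it is not otherwise needed for the deduction.)

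For part~\ref{C1.2b} I would start from the asymptotic formula of part~\ref{C1.2a} and estimate its main term trivially, summand by summand. Two classical inputs are required. First, since $(q,a_3)=1$, the complete sum satisfies $S_{1,3}(q;b,a_3)\ll q^{2/3+\eps}$ \emph{uniformly in the linear coefficient} $b$, this being the standard bound for complete exponential sums whose leading coefficient is coprime to the modulus (see, e.g., \cite[Section~7]{V:HL}). Second, rescaling the integral to the unit interval and applying van der Corput's third-derivative estimate yields $I_{1,3}(\gam,\bet_3)\ll P\,(1+|\bet_3|P^3)^{-1/3}$ \emph{uniformly in} $\gam$. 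Hence each term of $q^{-1}\sideset{}{^\dag}\sum_{d\mid q}$ is bounded by
\begin{align*}
	q^{-1}\,\bigl|S_{1,3}(q;d\llbracket a_1/d\rrbracket,a_3)\bigr|\cdot\bigl|I_{1,3}(\cdot,\bet_3)\bigr|\ll q^{-1/3+\eps}\,P\,(1+|\bet_3|P^3)^{-1/3}=P^{1+\eps}(q+q|\bet_3|P^3)^{-1/3}.
\end{align*}
The number of summands is at most $\tau(q)\ll q^{\eps}\ll P^{\eps}$, so the entire main term is $\ll P^{1+\eps}(q+q|\bet_3|P^3)^{-1/3}$; adding the error $O(P^{3/4+\eps})$ from part~\ref{C1.2a} gives the asserted bound. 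Uniformity in $\alp_1$ is automatic, since every $\alp_1$ admits a representation $\alp_1=a_1/q+\bet_1$ with the \emph{same} $q$ and $|\bet_1|\le(2q)^{-1}$, and both estimates above are uniform in $a_1$ and $\bet_1$.

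There is no serious obstacle once Theorem~\ref{T1.1} is available; the one point that must be handled with care is that one uses the sharpened error term $O(q^{1/2+\eps}(1+|\bet_3|P^3)^{1/2}\log P)$ of Theorem~\ref{T1.1} rather than the earlier bound~\eqref{1.7}. Indeed, when $q$ is of size about $P^{3/2}$ the hypothesis forces $|\bet_3|$ to be essentially $0$, whence \eqref{1.7} delivers only $\Del_{1,3}\ll q^{2/3+\eps}\ll P^{1+\eps}$, which is no better than the trivial bound $f_{1,3}\le P$. It is precisely the square-root dependence on $q$ in Theorem~\ref{T1.1} that upgrades this to the non-trivial exponent $3/4$.
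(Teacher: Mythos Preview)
Your proof is correct and essentially the same as the paper's. The paper likewise deduces part~\eqref{C1.2a} immediately from Theorem~\ref{T1.1} with $k=3$, and for part~\eqref{C1.2b} inserts the bound $S_{1,3}(q;b,a_3)\ll q^{2/3+\eps}$ (Lemma~\ref{L2.1}\eqref{L2.1a}, which is Theorem~7.1 of \cite{V:HL}) together with the integral estimate of \cite[Theorem~7.3]{V:HL} into the main term; your van~der~Corput argument reproduces the latter bound, and the divisor-function factor is absorbed into $P^{\eps}$ just as you say.
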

For general degree $k$, an analogous chain of reasoning would replace the error terms $O(P^{3/4+\eps})$ by $O(P^{k/4+\eps})$, which is trivial when $k \ge 4$. Thus, our result in Theorem~\ref{T1.1} is strongest in the cubic case, and for higher degrees should be viewed as a bound for the major arcs only.

When $d=(a_1,q)$ we have $d \llbracket a_1/d \rrbracket = a_1$; this is the leading term in Theorem~\ref{T1.1} and corresponds to the approximation in \eqref{1.5}. We can thus rephrase the conclusion of Theorem~\ref{T1.1} in the form
\begin{align}\label{1.9}
	\Del_{1,k}(q,\ba;\bbet) &= q^{-1}\sideset{}{^\dag}\sum_{\substack{d|q \\ d \neq (q, a_1)}}  S_{1,k} (q; d \llbracket a_1/d\rrbracket, a_k)I_{1,k}\left(\alp_1-\frac{\llbracket a_1/d \rrbracket}{q/d}, \bet_k\right) \nonumber\\
	& \qquad + O(q^{1/2+\eps}(1+|\bet_k|P^k)^{1/2}\log P).
\end{align}
When $d \neq (a_1,q)$ the fractions $a_1/d$ are all non-integral. In particular, when $a_1/d$ is half an odd integer, there are two choices for $\llbracket a_1/d \rrbracket$ and both may occur in the sum. Note further that the sum in \eqref{1.9} is empty if and only if $a_1$ is a multiple of $q$. When $q \nmid a_1$ we have $(a_1,q) \neq q$, and thus it will always contain at least the term $S_{1,k}(q;0,a_k) I_{1,k}(\alp_1, \bet_k)$ corresponding to $d=q$, and in many cases this is the only one. For instance, it is not hard to see that when $a_1= 1$ and $q > 1$ is odd, the sum in Theorem~\ref{T1.1} contains precisely the terms corresponding to $d=1$ and $d=q$, and so in this case the asymptotic formula reads
\begin{align*}
	f_{1,k}(\alp_1, \alp_k) &= q^{-1} \big(S_{1,k}(q;1, a_k) I_{1,k}(\bet_1, \bet_k) + S_{1,k}(q;0, a_k) I_{1,k}(\alp_1, \bet_k)\big)\\
	& \qquad + O(q^{1/2+\eps}(1 + |\bet_k|P^k)^{1/2}\log P).
\end{align*}
This behaviour, which occurs in many generic situations, indicates that we cannot expect the secondary terms to be subject to any significant cancellation. In fact, we have the following result on the size of the error term.
\begin{theorem}\label{T1.3}
	Suppose that $a_k$ and $q$ are coprime integers satisfying $|\alp_k - a_k/q| < q^{-2}$, and assume that \eqref{1.4} holds with $|\bet_1| < (2q)^{-1}$.
	\begin{enumerate}[(a)]
		\item \label{T1.3a}
We have the upper bound			
\begin{align*}
				\Del_{1,k}(q,\ba;\bbet) \ll q^{1/2+\eps}\left(1+\sideset{}{^\dag}\sum_{\substack{d|q\\d \neq (q,a_1)}}  \frac{|S_{1,k}(q; d\llbracket a_1/d \rrbracket, a_k)|}{q^{1/2}} \right) (1+|\bet_k|P^k)^{1/2} \log P.
			\end{align*}
		\item \label{T1.3b}
			Suppose now that $\bet_k = 0$ and $a_1=1$ with $q=p^k$, where $p$ is an odd prime. Then, whenever $\| \alp_1 P \| \ge \delta$ for some suitable real number $\del>0$, we have the lower bound
			\begin{align*}
				|\Del_{1,k}(q, 1, a_k; \bet_1,0)| \ge \frac{4\del}{3\pi} q^{1-1/k}+ O(q^{1/2+\eps} \log P).
			\end{align*}
	\end{enumerate}
\end{theorem}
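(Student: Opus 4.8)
The plan is to deduce both parts from Theorem~\ref{T1.1}, used in the form \eqref{1.9}, by estimating the exponential integrals $I_{1,k}$ that appear there. For a divisor $d\mid q$ write $v_d=d\llbracket a_1/d\rrbracket$ and $\gamma_d=\alp_1-v_d/(q/d)=(a_1-v_d)/q+\bet_1$. The elementary observation behind both parts is that for every $d$ contributing to the $\dagger$-sum in \eqref{1.9}---that is, $d\neq(q,a_1)$ with $(\llbracket a_1/d\rrbracket,q/d)=1$---the integer $a_1-v_d$ is nonzero: if $d\mid a_1$ then $v_d=a_1$, which is the excluded leading term. Hence $|a_1-v_d|\ge1$, and since $|\bet_1|\le(2q)^{-1}$ we get $|\gamma_d|\ge(2q)^{-1}$.

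For part~\ref{T1.3a} the plan is to bound each $I_{1,k}(\gamma_d,\bet_k)=\int_0^P e(\gamma_d x+\bet_k x^k)\d x$ by the first- and second-derivative tests for exponential integrals. The phase derivative $\gamma_d+k\bet_k x^{k-1}$ is monotone on $[0,P]$. If it has no zero there, the first-derivative test together with $|\gamma_d|\ge(2q)^{-1}$ gives $I_{1,k}(\gamma_d,\bet_k)\ll q$, the only delicate case---a virtual stationary point just beyond $P$---being dealt with by splitting the range and applying the second-derivative test near $P$. If the derivative vanishes at some $x_0\in[0,P]$, then $x_0^{k-1}=|\gamma_d|/(k|\bet_k|)\le P^{k-1}$ forces $|\bet_k|\gg_k q^{-1}P^{1-k}$, while $|\phi''(x_0)|=(k-1)|\gamma_d|/x_0$, so the second-derivative test (after a dyadic split at $x_0/2$) yields $I_{1,k}(\gamma_d,\bet_k)\ll_k(x_0/|\gamma_d|)^{1/2}\ll(qP)^{1/2}\ll_k q(|\bet_k|P^k)^{1/2}$. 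In all cases $I_{1,k}(\gamma_d,\bet_k)\ll_k q(1+|\bet_k|P^k)^{1/2}$. Substituting this into \eqref{1.9}, the factor $q^{-1}$ bounds the $d$-th term by $q^{1/2}\cdot\bigl(q^{-1/2}|S_{1,k}(q;v_d,a_k)|\bigr)\cdot(1+|\bet_k|P^k)^{1/2}$; summing over $d$, using $q^{1/2}\le q^{1/2+\eps}\log P$, and absorbing the error term of \eqref{1.9} into the ``$1$'' inside the bracket produces exactly the bound of part~\ref{T1.3a}.

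For part~\ref{T1.3b} I would specialise \eqref{1.9} to $\bet_k=0$, $a_1=1$, $q=p^k$. Since $p\ge3$ we have $\llbracket 1/p^j\rrbracket=0$ for every $j\ge1$, so every divisor $d=p^j$ with $1\le j\le k$ gives $v_d=0$; the coprimality condition $(\llbracket 1/d\rrbracket,q/d)=(0,p^{k-j})=p^{k-j}$ equals $1$ only for $j=k$, so the $\dagger$-sum collapses to the single term $q^{-1}S_{1,k}(q;0,a_k)I_{1,k}(\alp_1,0)$ (coming from $d=q\neq1=(q,a_1)$). Now $S_{1,k}(q;0,a_k)=S_k(p^k;a_k)$ is the classical complete Weyl sum, and for an odd prime $p$ with $p\nmid a_k$ a standard Hensel-lifting computation (write $x=u+p^{k-1}v$ and iterate) gives $|S_k(p^k;a_k)|=p^{k-1}=q^{1-1/k}$. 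Moreover $I_{1,k}(\alp_1,0)=\bigl(e(\alp_1 P)-1\bigr)/(2\pi i\alp_1)$, so $|I_{1,k}(\alp_1,0)|=|\sin(\pi\alp_1 P)|/(\pi\alp_1)=\sin(\pi\|\alp_1 P\|)/(\pi\alp_1)$, and using $0<\alp_1=1/q+\bet_1<3/(2q)$, the elementary inequality $\sin(\pi t)\ge2t$ on $[0,\tfrac12]$, and $\|\alp_1 P\|\ge\delta$, this is $\ge\tfrac{4q\delta}{3\pi}$. Multiplying these two lower bounds, including the prefactor $q^{-1}$, and subtracting the error $O(q^{1/2+\eps}\log P)$ inherited from \eqref{1.9} via the triangle inequality gives $|\Del_{1,k}(q,1,a_k;\bet_1,0)|\ge\tfrac{4\delta}{3\pi}q^{1-1/k}+O(q^{1/2+\eps}\log P)$, as claimed; here ``suitable'' $\delta$ just means $\delta$ larger than a fixed multiple of $q^{-(1/2-1/k)+\eps}\log P$, which is harmless for $k\ge3$ once $q$ is large.

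The one genuinely technical ingredient is the uniform estimate $I_{1,k}(\gamma,\bet)\ll_k q(1+|\bet|P^k)^{1/2}$, valid whenever $|\gamma|\ge(2q)^{-1}$, which is needed in part~\ref{T1.3a}; the remainder is bookkeeping once Theorem~\ref{T1.1} is available. If an estimate of this shape has already been recorded in the proof of Theorem~\ref{T1.1}, it may of course be quoted directly, shortening the argument.
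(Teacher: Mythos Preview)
Your proof is correct and matches the paper's approach: both parts are deduced from \eqref{1.9}, with part~\ref{T1.3a} using $|\gamma_d|\ge(2q)^{-1}$ and the integral bound $I_{1,k}(\gamma_d,\bet_k)\ll q(1+|\bet_k|P^k)^{1/2}$ (recorded in the paper as Lemma~\ref{L2.5}, confirming your closing remark), and part~\ref{T1.3b} proceeding via the explicit evaluation of $I_{1,k}(\alp_1,0)$ together with $|S_{1,k}(p^k;0,a_k)|=p^{k-1}$. One trivial slip: your displayed definition of $\gamma_d$ should read $\alp_1-v_d/q$ (not $v_d/(q/d)$), consistent with your own next expression $(a_1-v_d)/q+\bet_1$.
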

Thus Theorem~\ref{T1.3} shows that the bound \eqref{1.7} of Br\"udern and Robert is sharp at least when $\bet_k$ is small and $q$ is a perfect $k$-th power of a square-free number. Here, the term of size $q^{1-1/k} = p^{k-1}$ arises from the exponential sum $S_{1,k}(q;0,a_k)$ via Lemma~4.4 in \cite{V:HL}, and as noted above, unless $q|a_1$ this term will always appear in the sum in \eqref{1.9}. Thus, large values of $\Del_{1,k}(q,\ba;\bbet)$ cannot be considered an exceptional occurrence when $\bet_k$ is small.

%

As a consequence of Theorem~\ref{T1.1}, we are able to make progress on a problem concerning the fractal dimension of solutions of certain partial differential equations. The motivation for this problem goes back to optical experiments by Talbot \cite{Tal} in the 1830s concerning the diffraction of light passing through a grating. Berry \cite{Ber} later initiated the theoretical investigation of the problem and has in particular made predictions regarding the fractal dimension of the diffraction pattern along certain slices in space. The reader is referred to Chapter 2 of \cite{ET} and the introduction of \cite{ES} for an introduction to the general topic as well as a more thorough history of this particular problem.

In this paper, we shall focus in particular on the family of partial differential equations given by
\begin{align}\label{1.10}
	i \partial_t  q(t,x) - i^k \partial^{(k)}_x q(t,x) = 0 \qquad (k \in \N),
\end{align}
where $t \in \R$ and $x \in \R / 2 \pi \Z$. When $k=2$, the reader will recognize \eqref{1.10} as the linear Schr\"odinger equation, while the case $k=3$ corresponds to the linear part of the Korteweg-de~Vries (KdV) equation, also known as Airy's equation. For any $k$, given initial data $g_k(n) \in L^2(\R/2 \pi \Z)$, the evolution of $g_k$ under \eqref{1.10} is given by
\begin{align*}
	q_k(t,x) = \sum_{n \in \Z} \hat g_k(n) e^{i t n^k + i x n}.
\end{align*}
Clearly, $q_k$ is periodic in both $t$ and $x$ with period $2 \pi$.

We are interested in the restriction of $q_k$ to linear subsets of $(\R / 2 \pi \Z) \times (\R / 2 \pi \Z)$.
Given $c \in \R$ and $r \in \Q \setminus \{0\}$, as well as initial data $g_k$, let
\begin{align*}
	q_{k;r,c}(x) = \sum_{n \in \Z} \hat g_k(n) e^{i (c-rx)n^k + i x n}
\end{align*}
denote the restriction of $q_k$ to the oblique line $t+ rx=c$. 
Recall that the fractal  (also known as upper Minkowski or upper box-counting) dimension of a bounded set $E$ is given by
$$
	\overline{\text{dim}}(E)=\limsup_{\eps\to 0}\frac{\log({\mathcal N}(E,\eps))}{\log(1/\eps)},
$$
where ${\mathcal N}(E,\eps)$ is the minimum number of $\eps$-balls required to cover $E$.
Assuming that $g_k$ is a suitably well-behaved function, we would like to know the the fractal dimension of the real and imaginary parts of the graph of $q_{k;r,c}$ for a typical $c$. Note here that it is possible for either the real or the imaginary part of the graph to vanish, so we are really interested in the size of the larger of the two.
The simplest non-constant choices for $g_k$ are step functions, and in such situations, we see that in order to make progress, it is imperative to understand the distribution of large values of exponential sums. As it is convenient to work with dyadic sums in this context, we modify our definition \eqref{*1.1} by writing
\begin{align}\label{1.11}
	f_{\bk}(\alp_{k_1}, \ldots, \alp_{k_t}; Q) = \sum_{Q < n \le 2 Q} e(\alp_{k_1} n^{k_1} + \ldots + \alp_{k_t} n^{k_t}),
\end{align}
where $Q$ is a positive number.
Let $\Tet_k$ denote the set of all $\tet \in \R$ such that for almost all $\gam \in [0,1)$ one has
\begin{align}\label{1.12}
	\sup_{Q \ge 1} Q^{-\tet} \sup_{\alp \in [0,1)} |f_{1,k}(\alp, \alp+\gam; Q)| \ll_{\tet,\gam} 1,
\end{align}
and set $\tet_k = \inf \Tet_k$. The size of $\theta_k$ and related quantities has recently been studied by Chen and Shparlinski \cite{ChS}, building on work by Wooley \cite{W:16}. Clearly, one sees that
\begin{align*}
	\lfloor 2 Q \rfloor - \lfloor Q \rfloor = \int_0^1 |f_{1,k}(\alp, \alp+\gam; Q)|^2 \d \alp \le \left(\sup_{\alp \in [0,1)}|f_{1,k}(\alp, \alp+\gam; Q)| \right)^2 \le Q^2,
\end{align*}
whence we have the trivial bounds
\begin{align}\label{1.13}
	1/2 \le \tet_k \le 1
\end{align}
valid for all $k \ge 2$ and for the entire range $\gam \in [0,1)$. Moreover, we have the trivial bound $\sup_{\alp, \gam}|f_{1,k}(\alp, \alp+\gam; Q)|=Q$, and it is known (see e.g. \cite[Corollary~2.2]{ChS}) that for independent variables $\gam$, $\alp$ we have $|f_{1,k}(\alp, \alp+\gam; Q)| \ll Q^{1/2 + \eps}$ almost everywhere. It turns out that in our case where only one of the variables is restricted to lie in the complement of a thin set while the other one ranges freely, the bound is appreciably larger.
\begin{theorem}\label{T1.4}
	We have $\tet_2 = \tet_3 = 3/4$.
\end{theorem}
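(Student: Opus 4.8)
The plan is to establish $\tet_2 = \tet_3 = 3/4$ by proving the two inequalities $\tet_k \le 3/4$ and $\tet_k \ge 3/4$ separately for $k \in \{2,3\}$, with the upper bound coming from Corollary~\ref{C1.2}\eqref{C1.2b} (and its quadratic analogue) and the lower bound coming from Theorem~\ref{T1.3}\eqref{T1.3b}. For the cubic case, to show $\tet_3 \le 3/4$, fix $\gam \in [0,1)$ and, for a given dyadic $Q$ and a given $\alp \in [0,1)$, use Dirichlet's theorem to approximate $\alp_3 = \alp + \gam$ by $a_3/q + \bet_3$ with $q(1 + Q^3|\bet_3|) \ll Q^{3/2}$ and $(q,a_3)=1$; simultaneously one must arrange $|\bet_1| \le (2q)^{-1}$ for the $\alp_1 = \alp$ component, which can be done provided $q$ is not too large, the remaining ranges of $\alp$ being handled by a separate (and easier) Weyl-type estimate. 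Corollary~\ref{C1.2}\eqref{C1.2b} then gives $f_{1,3}(\alp,\alp+\gam;Q) \ll Q^{1+\eps}(q + q|\bet_3|Q^3)^{-1/3} + Q^{3/4+\eps}$, so the bound $Q^{3/4+\eps}$ holds whenever $q(1+|\bet_3|Q^3) \gg Q^{3/4}$. The exceptional set is therefore $\{\gam : \text{there exist infinitely many } Q \text{ with } \dist(\gam, \tfrac{a_3}{q} - \alp\!\!\pmod 1) \text{ small for some small } q\}$; a Borel--Cantelli argument over the countably many bad pairs $(q,a_3)$ shows this set has measure zero, giving $\tet_3 \le 3/4$. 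The quadratic case $\tet_2 \le 3/4$ is entirely analogous, using the epsilon-free quadratic version of \eqref{1.7} referenced after Theorem~\ref{T1.1} in place of Corollary~\ref{C1.2}.

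For the lower bound $\tet_k \ge 3/4$ (again $k\in\{2,3\}$), the idea is to exhibit, for a positive-measure set of $\gam$, infinitely many scales $Q$ and choices of $\alp$ at which $|f_{1,k}(\alp,\alp+\gam;Q)| \gg Q^{3/4}$. We take $q = p^k$ with $p$ an odd prime near $Q^{3/4}$ — so that $q^{1-1/k} = p^{k-1} \asymp Q^{3(k-1)/(4k)}$, which for $k=2$ is $Q^{3/8} = q^{1/2}$ and for $k=3$ is $Q^{1/2}$; more pertinently $q \asymp Q^{3/4}$ so $f_{1,k}$ on the major arc around $a/q$ has size about $q^{-1} S_{1,k}(q;\cdot) I_{1,k}(\cdot) \asymp P q^{-1+?}$, and one arranges $a_1 = 1$, $\bet_k = 0$ (i.e.\ $\alp + \gam = a_k/q$ exactly, forcing $\alp_3$ rational), and $\|\alp P\| \ge \del$. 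Here $P = Q$ or a fixed multiple thereof, and Theorem~\ref{T1.3}\eqref{T1.3b} yields $|\Del_{1,k}| \gg q^{1-1/k}$; combined with the leading term this should produce $|f_{1,k}| \gg P q^{-1/k} \asymp Q \cdot Q^{-3/(4k)}$. For $k=3$ that is $Q^{3/4}$, as desired; for $k=2$ one instead chooses $q \asymp Q^{1/2}$ so that $Pq^{-1/2} \asymp Q^{3/4}$ — the exponent bookkeeping is the delicate part and one must check that the relevant $S_{1,k}(q;1,a_k)$ term does not destroy the main contribution. One then needs this to happen for $\gam$ in a set of positive measure: since $\gam = a_k/q - \alp \pmod 1$ and $\alp$ is free subject only to $\|\alp Q\|\ge\del$, for each admissible $(q,a_k)$ the set of good $\gam$ has measure $\gg 1$, and by a suitable limsup/density argument (e.g.\ a Borel--Cantelli-type lower bound, or simply noting the bad set for $\tet < 3/4$ must contain such $\gam$) one concludes $\tet_k \ge 3/4$.

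The main obstacle, I expect, is the lower bound: one must ensure that the positive-measure set of $\gam$ for which \eqref{1.12} fails with exponent $3/4 - \eps$ is genuinely of positive (indeed full, in the relevant sense) measure, which requires a quantitative statement about how often, as $Q \to \infty$, a fixed $\gam$ admits an approximation $\gam + \alp \equiv a_k/q \pmod 1$ with $q$ in the precise window $\asymp Q^{3/(4)\cdot(\text{something})}$, $q$ a $k$-th power of a prime, and $\|\alp Q\| \ge \del$ simultaneously. This is a Diophantine-approximation input — essentially a Khintchine-type or three-distance-type result ensuring primes-to-the-$k$ denominators are "dense enough" — and reconciling the denominator constraint $q = p^k$ with the requirement that $\alp = a_k/q - \gam$ stays $\del$-separated from $\tfrac1Q\Z$ is where the real work lies. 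A secondary technical point is handling the $O(q^{1/2+\eps}\log P)$ error in Theorem~\ref{T1.3}\eqref{T1.3b}: one needs $q^{1-1/k}$ to dominate it, i.e.\ $q^{1/2 - 1/k} \gg \log P$, which is fine for $k=3$ ($q^{1/6} \gg \log P$) but vacuous for $k=2$ — so in the quadratic case one must use the epsilon-free refinement alluded to in the paper (the last author's \cite{V:09}) to get a clean $\gg q^{1/2}$ lower bound for $\Del_{1,2}$ instead.
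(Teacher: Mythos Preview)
Your approach has genuine gaps in both directions.

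For the upper bound, Corollary~\ref{C1.2}\eqref{C1.2b} is uniform in $\alp_1$ and therefore cannot by itself give $\sup_{\alp} |f_{1,3}(\alp,\alp+\gam;Q)| \ll Q^{3/4+\eps}$ for \emph{any} $\gam$: take $\alp=-\gam$, so that $\alp+\gam=0$ has the Dirichlet approximation $q=1$, $\bet_3=0$, and part~\eqref{C1.2b} returns only the trivial bound $Q$. Your Borel--Cantelli sketch implicitly treats $\alp$ as fixed when describing the exceptional set of $\gam$, but $\alp$ ranges over all of $[0,1)$, so for every $\gam$ and every $Q$ there is an $\alp$ with $q(1+|\bet_3|Q^3)$ bounded. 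The paper instead uses the full expansion of Corollary~\ref{C1.2}\eqref{C1.2a} (and, for $k=2$, the analogous bound with explicit $\bet_1$-dependence) together with the identity $\bet_1-\bet_3=\gam-(a_3-a_1)/q$: for $\gam$ in a Khinchine-type full-measure set (the set $\Gam_0$ of Lemma~\ref{L5.1} when $k=3$) this forces $|\bet_1|$ to be large whenever $q$ and $|\bet_3|$ are small, and the $\bet_1$-decay of the integral then supplies the saving that \eqref{C1.2b} throws away.

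For the lower bound, Theorem~\ref{T1.3}\eqref{T1.3b} bounds $|\Del_{1,k}|$ from below, not $|f_{1,k}|$; from $f=q^{-1}SI+\Del$ one only obtains $|f|\ge \big||q^{-1}SI|-|\Del|\big|$, and nothing prevents near-cancellation. More concretely, with your choice $q=p^3\asymp Q^{3/4}$ and $a_1=1$ one has $|S(q;1,a_3)|\ll q^{1/2+\eps}$ by Lemma~\ref{L2.1}\eqref{L2.1b}, so the leading term is $\ll q^{-1/2+\eps}Q\asymp Q^{5/8+\eps}$, while $|\Del|\gg q^{2/3}\asymp Q^{1/2}$; neither reaches $Q^{3/4}$, and hence neither can $|f|$. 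The paper's route is quite different: for every irrational $\gam$ one takes an odd continued-fraction denominator $q$ of $\gam$ and sets $Q\asymp q^2$, so that $\bet_1=c/q-\gam$ satisfies $|\bet_1|\le q^{-2}\ll Q^{-1}$ and hence $I_{1,k}(\bet_1,0;Q)\gg Q$. One then chooses $a_k$ so that $|S_{1,k}(q;a_k-c,a_k)|\gg q^{1/2-\eps}$ --- trivially for $k=2$ via the Gauss-sum evaluation, and for $k=3$ via the dedicated Lemma~\ref{L4.2} --- making the \emph{leading} term itself of size $\gg q^{-1/2-\eps}Q\asymp Q^{3/4-\eps}$, with all other contributions shown to be $O(Q^{1/3+\del})$. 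The Gauss-sum lower bound of Lemma~\ref{L4.2} is the key missing ingredient in your proposal, and the restriction to prime-$k$th-power moduli is both unnecessary and counterproductive.
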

Theorem~\ref{T1.4} may be a bit surprising as one naively expects square root cancellation in exponential sums. As will transpire from the proof, it turns out that for almost every $\gamma$ in \eqref{1.12} the supremum is obtained for a special choice of $\alpha$ on what can be considered a major arc.  One might speculate that $\tet_k = 3/4$ for all $k\ge 2$. Indeed, one might hope to adapt the proof of Theorem~\ref{T1.4} above to show that for almost all $\gam$ this gives the correct extremal value on a suitable set of major arcs and that for almost all $\gam$ the sum is smaller on the corresponding minor arcs. This latter speculation would be consistent with the main result of the last author and Wooley \cite{VW}.

With the help of Theorem~\ref{T1.4}, we can address our motivating problem.
\begin{theorem}\label{T1.5}
	For $k=2,3$ let $g_k$ be a step function, and fix $r \in \Q \setminus \{0\}$. Set $\alp_2 = 1/8$ and $\alp_3 = 1/12$. Then, for almost every $c \in \R$, the function $q_{k;r,c}$ satisfies the H\"older condition $C^{\alp}$ for every $\alp<\alp_k$. In particular, the fractal dimension of the graph of the real and imaginary parts of $q_{k;r,c}$ is at most $2-\alp_k$.
\end{theorem}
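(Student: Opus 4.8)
The plan is to deduce Theorem~\ref{T1.5} from Theorem~\ref{T1.4} by a standard transference between uniform bounds on dyadic Weyl sums and H\"older regularity of the associated lacunary-type Fourier series. First I would reduce to a single model sum: for a step function $g_k$, the Fourier coefficients $\hat g_k(n)$ decay like $|n|^{-1}$ (with a harmless arithmetic factor), so after a dyadic decomposition $n\in(Q,2Q]$ the increment $q_{k;r,c}(x+h)-q_{k;r,c}(x)$ is controlled, on each block, by $\min(1, Q|h|)$ times the relevant exponential sum. Writing the restriction explicitly, $q_{k;r,c}(x)=\sum_n \hat g_k(n)e(\tfrac{1}{2\pi}[(c-rx)n^k + xn])$, so on the line $t+rx=c$ the phase is $\alp_k n^k + \alp_1 n$ with $\alp_k = -\tfrac{r}{2\pi}x + \tfrac{c}{2\pi}$ and $\alp_1 = \tfrac{x}{2\pi}$; since $r$ is a fixed nonzero rational, varying $x$ moves $(\alp_1,\alp_k)$ along a line of rational slope, and $c$ enters precisely as the additive shift $\gam = \tfrac{c}{2\pi}$ in the second coordinate, which is exactly the free parameter in \eqref{1.12}. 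Thus for almost every $c$ the hypothesis of Theorem~\ref{T1.4} with $\tet_k = 3/4+\eps$ gives $\sup_{\alp}|f_{1,k}(\alp,\alp+\gam;Q)|\ll_{\gam,\eps} Q^{3/4+\eps}$ uniformly in the block.

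Next I would run the classical Weierstrass-function argument (as in the treatments in \cite{ET,ES}). Fix $h$ with $|h|$ small and let $N\asymp |h|^{-1}$. Split the sum over $n$ into blocks $|n|\le N$ and $|n|>N$. For the low-frequency part use the trivial increment bound $|e(\phi(x+h))-e(\phi(x))|\ll |h|\cdot(\text{derivative})$, combined with Abel summation against the coefficients $\hat g_k(n)\ll |n|^{-1}$, to get a contribution $\ll |h|\sum_{|n|\le N}|n|^{k-1}\cdot|n|^{-1}\ll |h| N^{k-1}\asymp |h|^{2-k}$; this is too large for $k\ge 2$, so instead one keeps the oscillation and estimates each dyadic block $Q\in(N/2^{j+1},N/2^{j}]$ by $\min(1,Q|h|)$ times $Q^{-1}\sup_\alp|f_{1,k}(\cdot;Q)|\ll Q^{-1}Q^{3/4+\eps}=Q^{-1/4+\eps}$, using partial summation to remove the coefficient. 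For the high-frequency part $Q>N$ one uses the plain bound $Q^{-1}\sup_\alp|f_{1,k}|\ll Q^{-1/4+\eps}$ on each dyadic block and sums the geometric-type series. Summing $\sum_{Q\le N}Q|h|\cdot Q^{-1/4+\eps}$ and $\sum_{Q>N}Q^{-1/4+\eps}$ and balancing at $Q\asymp N\asymp|h|^{-1}$ yields an increment bound $\ll |h|^{1/4-\eps}$... which one then has to reconcile with the claimed exponents $\alp_2=1/8$, $\alp_3=1/12$: the discrepancy comes from the extra arithmetic weight hidden in $\hat g_k(n)$ for a general step function (the Gauss-sum-type factors $S_{1,k}(q;\cdot)/q$), which degrade $Q^{-1/4+\eps}$ to roughly $Q^{-\alp_k+\eps}$ after accounting for the density of bad denominators; I would make this precise by inserting the major/minor arc structure from Theorem~\ref{T1.1} block by block, showing that the worst $\alp$ forces $n^k$ near a rational $a/q$ with $q$ not too small, and tracking the resulting loss.

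Then I would convert the increment bound into the H\"older statement and the fractal-dimension corollary. If one proves $|q_{k;r,c}(x+h)-q_{k;r,c}(x)|\ll_{c,\eps}|h|^{\alp}$ for every $\alp<\alp_k$, uniformly in $x$, that is exactly the assertion $q_{k;r,c}\in C^{\alp}$. The bound on the fractal dimension of the graph is then the textbook estimate: a function on a bounded interval satisfying $C^\alp$ has a graph of upper box-counting dimension at most $2-\alp$ (cover by boxes of side $\eps$, on each horizontal $\eps$-interval the oscillation is $\ll\eps^\alp$, so $\ll\eps^{\alpha-1}$ boxes suffice per column and $\ll\eps^{-1}$ columns, giving $\ll\eps^{\alpha-2}$ boxes), and the same applies separately to the real and imaginary parts; since we are told to take the larger of the two, the stated bound $2-\alp_k$ follows. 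I would remark that the almost-everywhere exceptional set in $c$ is exactly the preimage under $c\mapsto c/(2\pi)$ of the measure-zero set of $\gam$ excluded in Theorem~\ref{T1.4}, which has measure zero in $\R$.

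The main obstacle I anticipate is not the soft transference but the bookkeeping needed to land on the specific constants $\alp_2=1/8$ and $\alp_3=1/12$ rather than the cruder $1/4$: this requires carefully combining the dyadic uniform bound $Q^{3/4+\eps}$ from Theorem~\ref{T1.4} with the precise major-arc description in Theorem~\ref{T1.1} (and the lower bound in Theorem~\ref{T1.3}, which presumably shows these exponents are best possible for step-function data), so that the arithmetic factors in $\hat g_k$ and in $S_{1,k}(q;\cdot)$ are propagated correctly through the partial summation over blocks. A secondary technical point is making sure the line $t+rx=c$ genuinely realizes the family \eqref{1.12}: one must check that the rational slope $r$ does not cause the major-arc denominators along the line to degenerate, which is where the hypothesis $r\in\Q\setminus\{0\}$ and the freedom in $c$ are used, and where one invokes Dirichlet's theorem (as in Corollary~\ref{C1.2}) to place every relevant $\alp_k$ in a manageable approximation.
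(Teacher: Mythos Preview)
Your overall plan is the right one and matches the paper's: the paper simply says the proof is identical to that of \cite[Corollary~3.5]{ES}, with Theorem~\ref{T1.4} ($\tet_k=3/4$) substituted for \cite[Proposition~3.3]{ES}. But your execution has a concrete error that leads you to the wrong H\"older exponent and then to an incorrect explanation for the discrepancy.

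The mistake is in the increment factor. On the oblique line $t+rx=c$ the phase at frequency $n$ is $(c-rx)n^k+xn$, whose $x$-derivative is $-rn^k+n\asymp n^k$ for $n$ large (this is precisely where $r\ne 0$ is used). Hence on a dyadic block $Q<|n|\le 2Q$ the increment $|e(\phi(x+h))-e(\phi(x))|$ is bounded by $\min(1,Q^{k}|h|)$, not $\min(1,Q|h|)$. Combining this with $|\hat g_k(n)|\ll Q^{-1}$ and, after partial summation, $\sup_{\alp}|f_{1,k}(\alp,\alp+\gam;Q)|\ll Q^{3/4+\eps}$ from Theorem~\ref{T1.4}, each block contributes $\ll \min(1,Q^{k}|h|)\,Q^{-1/4+\eps}$. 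Splitting at $Q\asymp |h|^{-1/k}$ and summing dyadically gives
\[
\sum_{Q\le |h|^{-1/k}} Q^{k}|h|\,Q^{-1/4+\eps}
\; +\;
\sum_{Q> |h|^{-1/k}} Q^{-1/4+\eps}
\;\ll\; |h|^{1/(4k)-\eps},
\]
so $q_{k;r,c}\in C^{\alp}$ for every $\alp<1/(4k)$, i.e.\ $\alp_2=1/8$ and $\alp_3=1/12$ on the nose.

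In particular, the paragraph where you attribute the gap between $1/4$ and $1/(4k)$ to ``arithmetic weight hidden in $\hat g_k(n)$'' and to ``Gauss-sum-type factors $S_{1,k}(q;\cdot)/q$'' should be discarded: the Fourier coefficients of a step function carry no such arithmetic structure, and Theorem~\ref{T1.1} plays no role in this deduction beyond what is already packaged into Theorem~\ref{T1.4}. Once the increment factor is corrected, your soft transference and the standard $C^{\alp}\Rightarrow\overline{\dim}\le 2-\alp$ covering argument finish the job.
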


This improves the values of $\alp_2 = 1/10$ and $\alp_3 = 1/27$ of the fourth author with Erdo\u{g}an \cite[Theorem~1.1]{ES}. The proof is identical to that of \cite[Corollary 3.5]{ES}, but inputs our Theorem~\ref{T1.4} instead of \cite[Proposition 3.3]{ES}. Moreover, the results of Theorem~\ref{T1.5} can be transferred to non-linear partial differential equations, in particular the non-linear Schr\"odinger and KdV equations, by the same methods as Theorems 1.2 and 1.3 are derived from Theorem~1.1 in \cite{ES}. 

Note that Theorem~1.1 in \cite{ES} also gives a lower bound for the fractal dimensions in question. Specifically, the authors show that the graph of at least one of the real and imaginary parts of $q_{k;r,c}$ has fractal dimension of at least $2-1/(2k)$, and this is sharp at least in the Schr\"odinger case $k=2$. Moreover, they remark that, if it were true that $\tet_k = 1/2$, their argument could be adapted to show that this lower bound reflects the actual value. Our Theorem~\ref{T1.4} rules out this approach at least for the cases $k=2$ and $k=3$. Meanwhile, if our speculation that $\tet_k = 3/4$ could be substantiated for all $k$, it would imply that the maximum dimension of the respective graphs of the real and imaginary parts would lie in the range $[2-\frac{1}{2k}, 2-\frac{1}{4k}]$. It is worth noting that Lemma 2 of \cite{Os} along with \cite{ES} imply that for special combinations of initial data and oblique lines the fractal dimension in Theorem~\ref{T1.5} for $k=2$ is precisely 7/4  (see \cite[Footnote 3]{ES} for more details).

\textbf{Notation.} Throughout the paper, we make use of the following conventions. All statements involving the letter $\eps$ are claimed to be true for all (sufficiently small) $\eps>0$. Thus, the precise `value' of $\eps$ is allowed to change from one line to the next. Moreover, $P$ always denotes a large positive number. We use Vinogradov's and Landau's notation liberally, and here the implied constants are allowed to depend on $k$ and $\eps$, but never on $P$, $Q$ or $\balp$.

\section{Preliminary Lemmata}

\noindent In this section, we briefly collect some technical lemmata that will be of use in our arguments later. All of these results pertain to the case $\bk = (1,k)$, and in order to avoid clutter, we will in our arguments below drop the multidegree $(1,k)$ in our notation. Throughout, $Q$ denotes a positive number. For easier reference, we begin by stating a few results from the literature.
\begin{lemma}\label{L2.1}
	Let $a_1, a_k \in \Z$ and $q \in \N$, and suppose that $(a_k, q)=1$.
	\begin{enumerate}[(a)]
		\item \label{L2.1a}
			Uniformly in $a_1$, we have $S(q; a_1, a_k) \ll q^{1-1/k+\eps}$.
		\item \label{L2.1b}
			Moreover, $S(q; a_1, a_k) \ll q^{1/2+\eps}(q, a_1)$.	
	\end{enumerate}
\end{lemma}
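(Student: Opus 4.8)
The plan is to deduce part \eqref{L2.1a} from the classical theory and to prove part \eqref{L2.1b} by reducing to prime-power moduli and running the standard $p$-adic completion (Hensel lifting) argument. Part \eqref{L2.1a} is immediate: applied to the polynomial $a_kx^k+a_1x$, whose coefficient vector is coprime to $q$ because $(a_k,q)=1$, the standard bound for complete Weyl sums (see e.g.\ \cite[Theorem~7.1]{V:HL}) gives $S(q;a_1,a_k)\ll q^{1-1/k+\eps}$ at once. For part \eqref{L2.1b}, I would first record that complete sums are multiplicative in the modulus: if $q=q_1q_2$ with $(q_1,q_2)=1$, splitting residues mod $q$ by the Chinese Remainder Theorem yields $|S(q;a_1,a_k)|=|S(q_1;a_1',a_k')|\,|S(q_2;a_1'',a_k'')|$, where $a_1',a_k'$ differ from $a_1,a_k$ only by multiplication by units modulo the respective prime-power factors; in particular $(a_k',q_1)=(a_k,q_1)$, and the partial gcds $(q_j,a_1^{(j)})$ multiply out to $(q,a_1)$. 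Since $q$ has $O(\log q/\log\log q)$ distinct prime divisors, it therefore suffices to establish $|S(p^\ell;a_1,a_k)|\le C(k)\,p^{\ell/2}(p^\ell,a_1)$ for a prime power $q=p^\ell$ with $p\nmid a_k$, the resulting factor $C(k)^{\omega(q)}$ being harmlessly absorbed into $q^\eps$.

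For such a $q=p^\ell$, put $f(x)=a_kx^k+a_1x$ and $f'(x)=ka_kx^{k-1}+a_1$, write $x=u+p^mv$ with $m=\lceil\ell/2\rceil$, and note that Taylor's formula kills the terms of order $p^{2m}\ge p^\ell$; summing over $v$ modulo $p^{\ell-m}$ then gives
\begin{align*}
	S(p^\ell;a_1,a_k)=p^{\ell-m}\sum_{\substack{u\bmod p^m\\ p^{\ell-m}\mid f'(u)}}e\!\left(f(u)/p^\ell\right),
\end{align*}
and the whole estimate reduces to counting the critical points, i.e.\ the solutions of $f'(u)\equiv0\pmod{p^{\ell-m}}$. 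In the generic case $p\nmid a_1k$, such a $u$ must be a unit (otherwise $f'(u)\equiv a_1\not\equiv0$), and then $u^{k-1}\equiv-a_1(ka_k)^{-1}\pmod{p^{\ell-m}}$ has at most $k-1$ solutions modulo $p^{\ell-m}$, hence at most $(k-1)p^{2m-\ell}$ modulo $p^m$; since at each of these the quadratic Taylor coefficient $f''(u)/2$ is a unit, when $\ell$ is odd one completes the square at the final level and extracts a further $p^{1/2}$ from a Gauss sum, while for $\ell=1$ one simply invokes Weil's bound (legitimate here because $p\nmid k$ makes $f$ of degree $k$ and non-degenerate). This yields $|S(p^\ell;a_1,a_k)|\ll_kp^{\ell/2}$, as needed since $(p^\ell,a_1)=1$ in this case.

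The remaining cases, where $p$ divides $a_1$ or $k$, require more care and constitute what I expect to be the main obstacle. When $p\mid a_1$, writing $p^j\parallel a_1$ (or $j=\ell$) so that $(p^\ell,a_1)=p^{\min(j,\ell)}$, one has to check that the extra critical points---which must either satisfy $p\mid u$ or force $v_p(ka_ku^{k-1})=j$---are at most $C(k)\,p^{\min(j,\ell)}$ times as numerous as the generic count, so that, after multiplication by $p^{\ell-m}$, their total contribution is still $\le C(k)\,p^{\ell/2}(p^\ell,a_1)$. When $p\mid k$ (only finitely many primes, all dividing $k$), one notes that $p^{v_p(k)}\le k$ keeps every such loss $O_k(1)$; here, if $p\nmid a_1$ and $\ell\ge2$ the sum in fact vanishes for want of critical points, while for $\ell=1$ one first uses $x^{p^{v_p(k)}}\equiv x$ in $\F_p$ to replace $f$ by a polynomial of degree $k_0=k/p^{v_p(k)}$ coprime to $p$ before applying Weil. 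Verifying uniformly that none of these degenerate families of critical points beats the factor $(q,a_1)$ is the crux of the matter; the remaining ingredients---the Chinese Remainder reduction and the completion/Weil input at the bottom level---are routine.
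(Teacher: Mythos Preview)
Your treatment of part \eqref{L2.1a} coincides with the paper's: both invoke Theorem~7.1 of \cite{V:HL}. For part \eqref{L2.1b}, however, the paper does not give an argument at all; it simply cites Lemma~4.1 of \cite{V:HL}. So where you embark on a multiplicative reduction, a $p$-adic stationary phase computation, and a case analysis of the degenerate primes, the paper is content to point to the literature.

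Your sketch is the standard route to such bounds and is essentially what underlies the cited lemma, so there is no genuine error in your strategy. That said, your outline is visibly incomplete in the places you flag: the cases $p\mid a_1$ and $p\mid k$ (and, though you do not mention it, $p\mid (k-1)$, which affects the Gauss-sum extraction when $\ell$ is odd since then $f''(u)$ need not be a unit) require a careful bookkeeping of how many extra critical points appear and how much cancellation is lost at each level. All of these losses are indeed $O_k(1)$ per prime and hence absorbed into $q^\eps$, but verifying this uniformly is precisely the content of the lemma you are reproving. In a paper that merely needs the estimate as input, the citation is the appropriate choice; your direct argument would be welcome in an expository setting but is unnecessary here.
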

\begin{proof}
	These are Theorem~7.1 and Lemma~4.1 in \cite{V:HL}, respectively.
\end{proof}
We also record an elementary average bound for exponential sums.
\begin{lemma}\label{L2.2}
	For any positive integer $q$ and any integer $a_k$ we have
	\begin{align*}
		\sum_{b=1}^q |S(q; b, a_k)| \le q^{3/2}.
	\end{align*}
\end{lemma}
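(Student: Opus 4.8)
The plan is the standard Cauchy--Schwarz-plus-orthogonality argument. First I would pass to the second moment: by Cauchy--Schwarz,
\begin{align*}
	\sum_{b=1}^q |S(q;b,a_k)| \le q^{1/2} \left( \sum_{b=1}^q |S(q;b,a_k)|^2 \right)^{1/2},
\end{align*}
so it suffices to show that the second moment on the right equals exactly $q^2$.

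To evaluate it, I would expand the modulus squared using $S(q;b,a_k) = \sum_{x=1}^q e\big(q^{-1}(bx + a_k x^k)\big)$, interchange the order of summation, and isolate the sum over $b$:
\begin{align*}
	\sum_{b=1}^q |S(q;b,a_k)|^2 = \sum_{x=1}^q \sum_{y=1}^q e\!\left( \frac{a_k(x^k - y^k)}{q} \right) \sum_{b=1}^q e\!\left( \frac{b(x-y)}{q} \right).
\end{align*}
The inner sum over $b$ is a complete geometric sum, hence equals $q$ when $q \mid (x-y)$ and $0$ otherwise. Since $x$ and $y$ both run over the complete residue system $\{1,\dots,q\}$, the congruence $x \equiv y \pmod q$ forces $x = y$; the double sum therefore collapses to the diagonal $x=y$, on which the argument of the remaining exponential vanishes, giving $q \cdot q = q^2$. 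Substituting back into the Cauchy--Schwarz bound yields $\sum_{b=1}^q |S(q;b,a_k)| \le q^{1/2}(q^2)^{1/2} = q^{3/2}$, as required.

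There is essentially no obstacle here: the estimate is elementary and sharp in the exponent, the only points needing a word of care being that the sum over $b$ runs over a full period so that orthogonality applies cleanly, and that restricting $x,y$ to $\{1,\dots,q\}$ makes $x \equiv y \pmod q$ equivalent to $x=y$, so the diagonal contributes exactly $q$ terms each equal to $1$.
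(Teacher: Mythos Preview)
Your proof is correct and follows exactly the same approach as the paper: Cauchy--Schwarz followed by expanding the square and exploiting orthogonality in $b$ to collapse to the diagonal $x=y$, yielding $\sum_b |S(q;b,a_k)|^2 = q^2$.
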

\begin{proof}
	By the Cauchy-Schwarz inequality we see that
	\begin{align*}
		\sum_{b=1}^q |S(q; b, a_k)|  \le q^{1/2} \left(\sum_{b=1}^q |S(q; b, a_k)|^2\right)^{1/2},
	\end{align*}
	and expanding the square yields
	\begin{align*}
		\sum_{b=1}^q |S(q; b, a_k)|^2 = \sum_{b=1}^q  \sum_{x,y=1}^q e\biggl(\frac{b (x-y)+ a_k (x^k-y^k)}{q}\biggr)= q \sum_{x=1}^q 1 = q^2.
	\end{align*}
	This completes the proof.
\end{proof}

The next result is a direct consequence of \cite[Lemma~4.2]{V:HL}.
\begin{lemma}\label{L2.3}
	Suppose that $\phi$ is a twice continuously differentiable function on an interval $I$ and let $H\ge 2$ be a number such that $|\phi'(x)| \le H$ for all $x \in I$. Suppose further that $\phi''$ has at most finitely many zeros in the interval $I$. Then
	\begin{align*}
	\sum_{x \in I \cap \Z} e(\phi(n)) = \sum_{|h|\le H} \int_I e(\phi(x) - hx) \d x + O(\log H).
	\end{align*}
\end{lemma}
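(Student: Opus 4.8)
The statement is a truncated Poisson summation formula, and the plan is to deduce it from \cite[Lemma~4.2]{V:HL}, which supplies exactly such a formula on any interval where the phase has monotonic derivative. Thus there are two things to add: a subdivision of $I$ reducing the general case to the monotonic one, and a clean-up of the resulting summation range over $h$.

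First I would subdivide. Since $\phi$ is $C^2$ and $\phi''$ has only finitely many zeros $c_1 < \dots < c_r$ in $I$, put $c_0 = \inf I$ and $c_{r+1} = \sup I$, and work on the intervals $I_j = (c_{j-1}, c_j] \cap I$ for $1 \le j \le r+1$. On each $I_j$ the continuous function $\phi''$ keeps a constant sign (intermediate value theorem), so $\phi'$ is monotonic on $I_j$; moreover the $I_j$ partition $I \cap \Z$ and partition $I$. Let $[\alpha_j, \beta_j]$ be the closed interval with endpoints the values of $\phi'$ at the two ends of $I_j$, so that $[\alpha_j, \beta_j] \subseteq [-H, H]$ and $\beta_j - \alpha_j \le 2H$. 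Applying \cite[Lemma~4.2]{V:HL} on $I_j$ then yields
\[ \sum_{n \in I_j \cap \Z} e(\phi(n)) = \sum_{\alpha_j - 1 \le h \le \beta_j + 1} \int_{I_j} e(\phi(x) - hx) \, \d x + O(\log H), \]
the error $O(\log(\beta_j - \alpha_j + 2))$ being $O(\log H)$ because $H \ge 2$. Both this implied constant and the number $r + 1$ of pieces depend only on $\phi$ --- and are absolutely bounded in our applications, where $\phi$ is a polynomial of degree $O_k(1)$ --- so summing these $r+1$ identities costs only a further $O(\log H)$.

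It then remains to replace the summation range $\alpha_j - 1 \le h \le \beta_j + 1$ by the clean set $|h| \le H$, after which summing over $j$ and using $\sum_j \int_{I_j} = \int_I$ completes the argument (integers sitting on an interface between two pieces are counted once; this affects at most $r = O(1)$ terms). So I would bound $\int_{I_j} e(\phi(x) - hx)\,\d x$ for integers $h$ with $|h| \le H$ and $h \notin [\alpha_j - 1, \beta_j + 1]$. For such an $h$, say $h > \beta_j + 1$ (the other case is symmetric), the function $\phi'(x) - h$ is monotonic on $I_j$ with $|\phi'(x) - h| = h - \phi'(x) \ge h - \beta_j \ge 1$ throughout $I_j$; the first-derivative test for exponential integrals (integration by parts) then gives $\big| \int_{I_j} e(\phi(x) - hx)\,\d x \big| \ll (h - \beta_j)^{-1}$. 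Summing over these $h$ produces a harmonic-type sum $\ll \sum_{1 \le m \le 2H} m^{-1} \ll \log H$, which is exactly the admissible error.

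The step I expect to be the crux is this last range extension, or rather its bookkeeping. The argument goes through precisely because the $\pm 1$ cushion in the summation range of \cite[Lemma~4.2]{V:HL} guarantees that every frequency $h$ newly drawn into the sum lies at distance at least $1$ from the range $[\alpha_j, \beta_j]$ of $\phi'$ on $I_j$, so the exponential integral it contributes decays like the reciprocal of that distance and the total is $O(\log H)$. Were there no such cushion, one would be left on each side with a single exponential integral over a possibly long interval $I_j$ whose linear phase has a tiny derivative, and the first-derivative test would be powerless against it; this is also where the hypothesis $H \ge 2$ is used, to keep $\log H$ positive and to ensure $\log(\beta_j - \alpha_j + 2) \ll \log H$.
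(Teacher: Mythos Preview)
Your proof is correct and follows essentially the same approach as the paper, whose proof reads in its entirety: ``This is immediate upon partitioning $I$ into subintervals on which $\phi'$ is monotonic, and then applying Lemma~4.2 of \cite{V:HL} on each of these finitely many intervals.'' You supply considerably more detail --- in particular an explicit range-extension step from $[\alpha_j-1,\beta_j+1]$ to $|h|\le H$ via the first-derivative test --- but this is bookkeeping that the paper simply absorbs into the citation of \cite[Lemma~4.2]{V:HL}, whose conclusion is already stated with the range $|h|\le H$.
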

\begin{proof}
	This is immediate upon partitioning $I$ into subintervals on which $\phi'$ is monotonic, and then applying Lemma~4.2 of \cite{V:HL} on each of these finitely many intervals.
\end{proof}

We continue with bounds on oscillating integrals. For a measurable subset $\calA$  we denote
\begin{align*}
	I(\bet_1, \bet_k; \calA) = \int_{\calA} e(\bet_1 x + \bet_k x^k) \d x.
\end{align*}
We then have the following bounds for $I(\bet_1, \bet_k; \calA)$.
\begin{lemma}\label{L2.4}
	Let $k \ge 2$ and suppose that $\calA$ is a finite union of pairwise disjoint intervals.
	\begin{enumerate}[(a)]
		\item \label{L2.4a}
			Let $\tau>0$ be a parameter satisfying $|\bet_1 + k\bet_k x^{k-1}| \ge \tau$ for all $x \in \calA$. Then
			\begin{align*}
				I(\bet_1, \bet_k; \calA) \ll \tau^{-1}.
			\end{align*}
		\item \label{L2.4b}
			Assume that $\calA \subseteq [Q,2 Q]$ for some $Q>0$. Then, whenever $|\bet_k| \neq 0$ we have
			$$
				I(\beta_1, \beta_k; \calA) \ll (|\beta_k| Q^{k-2})^{-1/2}.
			$$
	\end{enumerate}
\end{lemma}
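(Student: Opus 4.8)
The plan is to read both estimates as instances of van der Corput's first- and second-derivative tests applied to the phase $\psi(x)=\bet_1 x+\bet_k x^k$, whose derivatives are $\psi'(x)=\bet_1+k\bet_k x^{k-1}$ and $\psi''(x)=k(k-1)\bet_k x^{k-2}$. The key structural point is that $\psi''$ vanishes nowhere on $(0,\infty)$, so $\psi'$ is monotonic there; in our applications $\calA$ consists of positive reals, and for the general statement one splits off at most $O_k(1)$ subintervals so as to reduce to the case in which $\psi'$ is monotonic on the convex hull of $\calA$. I will then treat the (finitely many) component intervals of $\calA$ one at a time.

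For part (a), on a component $J=[a,b]$ on which $\psi'$ is monotonic and $|\psi'|\ge\tau$, I write $e(\psi(x))=\bigl(2\pi i\,\psi'(x)\bigr)^{-1}\frac{\d}{\d x}e(\psi(x))$ and integrate by parts to obtain
\begin{align*}
\int_J e(\psi(x))\,\d x=\left[\frac{e(\psi(x))}{2\pi i\,\psi'(x)}\right]_a^b+\frac{1}{2\pi i}\int_J\frac{\psi''(x)}{\psi'(x)^2}e(\psi(x))\,\d x.
\end{align*}
The boundary term is $O(\tau^{-1})$ by hypothesis; and since $1/\psi'$ is monotonic on $J$, we have $\int_J|\psi''/\psi'^2|\,\d x=\bigl|\psi'(b)^{-1}-\psi'(a)^{-1}\bigr|\le 2\tau^{-1}$, so the remaining integral is likewise $O(\tau^{-1})$. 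Summing over the components of $\calA$ gives $I(\bet_1,\bet_k;\calA)\ll\tau^{-1}$.

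For part (b), I use that on $[Q,2Q]$ one has $|\psi''(x)|=k(k-1)|\bet_k|\,x^{k-2}\ge k(k-1)|\bet_k|Q^{k-2}=:\lambda$, since $x\ge Q>0$ and $k\ge 2$ (when $k=2$ the second derivative is the constant $2\bet_k$ and $\lambda=2|\bet_k|$). I then decompose $\calA=\calA_{\mathrm{s}}\cup\calA_{\mathrm{l}}$ according to whether $|\psi'(x)|<\lambda^{1/2}$ or $|\psi'(x)|\ge\lambda^{1/2}$. Because $\psi''$ has constant sign, $\psi'$ moves at rate at least $\lambda$, so $\{x\in[Q,2Q]:|\psi'(x)|<\lambda^{1/2}\}$ is an interval of length at most $2\lambda^{-1/2}$; hence the contribution of $\calA_{\mathrm{s}}$ is at most $2\lambda^{-1/2}$ by the trivial bound. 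On $\calA_{\mathrm{l}}$ the hypothesis of part (a) holds with $\tau=\lambda^{1/2}$, so that piece contributes $\ll\lambda^{-1/2}$. Combining the two and recalling $\lambda\gg_k|\bet_k|Q^{k-2}$ yields $I(\bet_1,\bet_k;\calA)\ll(|\bet_k|Q^{k-2})^{-1/2}$.

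These computations are routine; the only point that deserves attention is that all estimates must be uniform, which is assured here because $\psi'$ has at most one turning point, so every decomposition involved produces only boundedly many intervals. The case $k=2$, where $\psi''$ is constant, should be recorded separately, but it reduces immediately to the classical bound $\int e(\bet_2 x^2)\,\d x\ll|\bet_2|^{-1/2}$ for Fresnel-type integrals.
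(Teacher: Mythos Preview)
Your argument is correct and is precisely the content of Titchmarsh's Lemmas~4.2 and~4.4, which is what the paper cites for its proof; you have simply written out the standard integration-by-parts and splitting arguments that underlie those first- and second-derivative tests. One small point: when you ``sum over the components of $\calA$'' in part~(a), the bound you obtain is really $O(N\tau^{-1})$ with $N$ the number of components, and your closing remark about $\psi'$ having at most one turning point controls only the \emph{extra} subdivisions you introduce, not $N$ itself---but this is immaterial here, since the paper (like Titchmarsh) implicitly allows the constant to depend on $N$, and in every application in the paper $N=O(1)$.
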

\begin{proof}
	These are Lemmata 4.2 and 4.4 in \cite{Titch}, respectively, applied to the function $F(x) = \bet_1 x + \bet_k x^k$.
\end{proof}

\begin{lemma}\label{L2.5}
	Assume that $k \ge 2$ and $\bet_1 \neq 0$. Suppose further that $\calA$ is a union of finitely many pairwise disjoint intervals contained inside $[Q, 2 Q]$ for some $Q>0$. Then we have the bound
	\begin{align*}
		I(\bet_1, \bet_k; \calA) \ll |\bet_1|^{-1} (1 + Q^k |\bet_k|)^{1/2}.
	\end{align*}
\end{lemma}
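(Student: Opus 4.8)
The plan is to run a simple dichotomy according to whether the phase $F(x) = \bet_1 x + \bet_k x^k$ has a critical point near the interval $[Q,2Q]$, i.e.\ according to the size of $k|\bet_k|Q^{k-1}$ relative to $|\bet_1|$. In the first regime $F'$ is bounded away from zero on the whole of $[Q,2Q]$, so Lemma~\ref{L2.4}\eqref{L2.4a} gives a bound of the shape $|\bet_1|^{-1}$; in the second regime $\bet_k$ is necessarily nonzero and already large, so the van~der~Corput type estimate of Lemma~\ref{L2.4}\eqref{L2.4b} is \emph{stronger} than what we are claiming. No splitting of $\calA$ is needed since both parts of Lemma~\ref{L2.4} apply directly to a finite union of disjoint intervals inside $[Q,2Q]$.

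First I would treat the case $|\bet_1| \ge k 2^k Q^{k-1}|\bet_k|$. Since $F'(x) = \bet_1 + k\bet_k x^{k-1}$ and $0 < x \le 2Q$ on $\calA$, the triangle inequality gives, for every $x \in \calA$,
\begin{align*}
	|F'(x)| \ge |\bet_1| - k|\bet_k| x^{k-1} \ge |\bet_1| - k|\bet_k|(2Q)^{k-1} \ge \tfrac12 |\bet_1|.
\end{align*}
Applying Lemma~\ref{L2.4}\eqref{L2.4a} with $\tau = |\bet_1|/2$ yields $I(\bet_1,\bet_k;\calA) \ll |\bet_1|^{-1}$, which is at most $|\bet_1|^{-1}(1 + Q^k|\bet_k|)^{1/2}$ since the second factor is $\ge 1$.

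Next I would treat the complementary case $|\bet_1| < k 2^k Q^{k-1}|\bet_k|$; here $\bet_k \neq 0$, so Lemma~\ref{L2.4}\eqref{L2.4b} applies and gives $I(\bet_1,\bet_k;\calA) \ll (|\bet_k| Q^{k-2})^{-1/2}$. It then remains to observe the algebraic identity
\begin{align*}
	(|\bet_k| Q^{k-2})^{-1/2} = \frac{(Q^k|\bet_k|)^{1/2}}{Q^{k-1}|\bet_k|} < \frac{k 2^k (Q^k|\bet_k|)^{1/2}}{|\bet_1|} \ll |\bet_1|^{-1}(1 + Q^k|\bet_k|)^{1/2},
\end{align*}
using the case hypothesis in the middle step and that $k$ is fixed. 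Combining the two cases gives the claim.

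There is essentially no serious obstacle here: the only point requiring a moment's thought is the realisation that the two regimes are governed by exactly the two estimates already available in Lemma~\ref{L2.4}, with the crossover occurring precisely where $|\bet_1| \asymp Q^{k-1}|\bet_k|$; everything else is bookkeeping with absolute constants depending only on $k$. One should just double-check the degenerate value $k=2$, where $Q^{k-2}=1$ and Lemma~\ref{L2.4}\eqref{L2.4b} reads $I \ll |\bet_2|^{-1/2}$, so that the computation above goes through unchanged.
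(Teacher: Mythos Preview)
Your proof is correct and follows essentially the same approach as the paper: a dichotomy into a first-derivative regime handled by Lemma~\ref{L2.4}\eqref{L2.4a} and a second-derivative regime handled by Lemma~\ref{L2.4}\eqref{L2.4b}, with the same algebraic manipulation $(|\bet_k|Q^{k-2})^{-1/2} = (Q^{k-1}|\bet_k|)^{-1}(Q^k|\bet_k|)^{1/2}$ in the latter case. The only cosmetic difference is that you phrase the dichotomy as an explicit numerical threshold $|\bet_1| \gtrless k2^k Q^{k-1}|\bet_k|$, whereas the paper phrases it as whether $|F'(x)| \ge \tfrac12|\bet_1|$ holds for all $x \in \calA$ and then deduces $|\bet_1| \asymp Q^{k-1}|\bet_k|$ in the failure case; the content is identical.
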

\begin{proof}
	Suppose first that the relation
	\begin{align}\label{2.1}
		|\bet_1 - k \bet_k x^{k-1}| \ge \textstyle{\frac12} |\bet_1|
	\end{align}
	holds for all $x \in \calA$. Then we see from Lemma~\ref{L2.4}\eqref{L2.4a} that
	\begin{align}\label{2.2}
		I(\bet_1, \bet_k; \calA) \ll |\bet_1|^{-1},
	\end{align}	
	which is sufficient to prove the lemma in this case. We may thus concentrate on the opposite case where the inequality \eqref{2.1} is violated for some $x \in \calA$. It follows from the triangle inequality that any such $x$ must satisfy the inequalities  $\frac12|\bet_1| \le k |\bet_k| x^{k-1} \le \frac32|\bet_1|$. Since $Q \le x \le 2 Q$, this can happen only if
	\begin{align}\label{2.3}
	|\bet_1| \asymp Q^{k-1}|\bet_k|
	\end{align}
	and in particular only when $\bet_k \neq 0$. We can thus deploy Lemma~\ref{L2.4}\eqref{L2.4b} and obtain
	\begin{align}\label{2.4}
		I(\bet_1, \bet_k; \calA) &\ll (Q^{k-2}|\bet_k|)^{-1/2} \notag \\ &\ll (|\bet_k| Q^{k-1})^{-1}(|\bet_k|Q^k)^{1/2}
		\ll |\bet_1|^{-1} (|\bet_k|Q^k)^{1/2},
	\end{align}	
	where in the last step we used \eqref{2.3} again. The full statement now follows upon combining \eqref{2.2} and \eqref{2.4}.
\end{proof}

\section{Proof of Theorem~\ref{T1.1}}\label{S3}

\noindent For the proof of our first main result it is convenient to work over dyadic ranges. Recalling our notation \eqref{1.11}, we make the analogous definition
\begin{align*}
	I(\bet_1, \bet_k; Q) = I_{1,k}(\bet_1, \bet_k;Q)= \int_Q^{2 Q} e(\bet_1 x + \bet_k x^k) \d x.
\end{align*}
Thus, if we can show that
\begin{align}\label{3.1}
	f(\alp_1, \alp_k;Q) &= q^{-1} \sideset{}{^\dag}\sum_{d|q} S(q; d \llbracket a_1/d \rrbracket, a_k) I\left(\alp_1-\frac{\llbracket a_1/d \rrbracket}{q/d}, \bet_k; Q\right) \nonumber\\
	&\qquad+ O(q^{1/2+\eps} (1+|\bet_k|Q^k)^{1/2}),
\end{align}
for any $Q \ge 1/2$, the conclusion of Theorem~\ref{T1.1} will follow upon dyadic summation, as
\begin{align*}
	f(\alp_1, \alp_k) = \sum_{i=1}^{\lceil \log P / \log 2 \rceil} f (\alp_1, \alp_k; 2^{-i}P).
\end{align*}

The initial stages of our argument follow along the lines of the proof of \cite[Theorem~3]{BR}, which in turn is an adaptation of the argument found in \cite[pp.~43--44]{V:HL}.
\begin{lemma}\label{L3.1}
	Assume \eqref{1.4} with $(a_1,q)=1$, and set
	\begin{align*}
		H=2^{k-1}q(1+kQ^{k-1}|\bet_k|).
	\end{align*}
	Then
	\begin{align*}
		f(\alp_1, \alp_k; Q)= q^{-1} \sum_{|h| \le H} S(q; a_1+h, a_k) I(\bet_1-h/q, \bet_k;Q) + O(q^{1/2}\log H).
	\end{align*}
\end{lemma}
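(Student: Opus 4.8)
The plan is to start from the defining sum $f(\alp_1,\alp_k;Q)=\sum_{Q<n\le 2Q}e(\alp_1 n+\alp_k n^k)$ and write $\alp_k=a_k/q+\bet_k$, $\alp_1=a_1/q+\bet_1$. First I would split the range of summation into residue classes modulo $q$, writing $n=qm+r$ with $1\le r\le q$ and $m$ ranging over an interval of length roughly $Q/q$. On each such class the term $e\bigl((a_1 n+a_k n^k)/q\bigr)$ depends only on $r$, pulling out the factor $e\bigl((a_1 r+a_k r^k)/q\bigr)$, while the remaining sum over $m$ is a sum of $e(\phi(m))$ for the smooth phase $\phi(m)=\bet_1(qm+r)+\bet_k(qm+r)^k$. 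The derivative of this phase on the relevant range is bounded by a quantity of size comparable to $q(1+kQ^{k-1}|\bet_k|)$, which is exactly why $H=2^{k-1}q(1+kQ^{k-1}|\bet_k|)$ is the right choice of truncation parameter; here one uses $Q<n\le 2Q$ so that $n^{k-1}\ll Q^{k-1}$, and $\phi''$ has only finitely many zeros since it is (essentially) a polynomial.

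Next I would apply Lemma~\ref{L2.3} to each of the $q$ inner sums over $m$, converting each into $\sum_{|h|\le H}\int e(\phi(x)-hx)\,\d x+O(\log H)$. The $O(\log H)$ errors, one per residue class, contribute a total of $O(q\log H)$; this is worse than the claimed $O(q^{1/2}\log H)$, so the first subtlety is that one must not apply Lemma~\ref{L2.3} naively to all $q$ classes. The standard fix --- this is the step that follows \cite[pp.~43--44]{V:HL} --- is to invoke the cancellation in the complete sum $S(q;a_1+h,a_k)$ via Lemma~\ref{L2.1}\eqref{L2.1a} or, more precisely, to estimate the contribution of the error terms after reassembling the $r$-sum into a Gauss-type sum, so that one saves a factor $q^{1/2}$ from the square-root cancellation in $S(q;\cdot,a_k)$ (coprimality of $a_k$ and $q$ is what makes this available). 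Concretely, after the substitution $x=qy+r$ in the integral, the $r$-sum recombines into $S(q;a_1+h,a_k)$ and the integral becomes $q^{-1}I(\bet_1-h/q,\bet_k;Q)$ after a change of variables, yielding the main term $q^{-1}\sum_{|h|\le H}S(q;a_1+h,a_k)I(\bet_1-h/q,\bet_k;Q)$; the error analysis must be arranged so that the aggregate of the $O(\log H)$ terms is controlled by $q^{1/2}\log H$ rather than $q\log H$, which is precisely where one exploits that the error in Lemma~\ref{L2.3} can be handled uniformly and combined with the modulus structure.

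The main obstacle, then, is bookkeeping the error term at the right level of precision: one has to interleave the residue-class decomposition with the van der Corput/Poisson step of Lemma~\ref{L2.3} in such a way that the $q$ separate logarithmic errors are beaten down to $q^{1/2}\log H$, rather than simply summed. I expect the cleanest route is to follow \cite{BR} and \cite{V:HL} essentially verbatim: keep the phase in the form $\alp_1 n+\alp_k n^k$ with $\alp_k$ in its rational-plus-small form, apply the truncated Poisson summation of Lemma~\ref{L2.3} to the full sum over $n$ (not class by class) after first replacing $e(\alp_k n^k)$ by its behaviour modulo $q$, and only then collapse the resulting arithmetic sum into $S(q;a_1+h,a_k)$. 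The bound $|\phi'|\le H$ needed to invoke Lemma~\ref{L2.3} comes from $|\bet_1|\le(2q)^{-1}\le 1$ together with $k|\bet_k|(2Q)^{k-1}\le kQ^{k-1}|\bet_k|2^{k-1}$, giving a derivative bound within the stated $H$; verifying this inequality and checking the finitely-many-zeros hypothesis for $\phi''$ are the only genuinely computational points, and they are routine.
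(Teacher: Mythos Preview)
Your overall architecture---residue classes followed by truncated Poisson (Lemma~\ref{L2.3})---is the right shape, but the mechanism you describe for beating $O(q\log H)$ down to $O(q^{1/2}\log H)$ is not quite the one that works, and this is the crux of the lemma.

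The paper does \emph{not} apply Lemma~\ref{L2.3} class by class. Instead it first uses orthogonality to detect the congruence $n\equiv r\pmod q$, which produces the identity
\[
f(\alp_1,\alp_k;Q)=\frac1q\sum_{-q/2<b\le q/2} S(q;a_1+b,a_k)\,f(\bet_1-b/q,\bet_k;Q),
\]
where the inner sum $f(\bet_1-b/q,\bet_k;Q)$ runs over \emph{all} $n\in(Q,2Q]$ with the smooth phase $(\bet_1-b/q)x+\bet_k x^k$. Lemma~\ref{L2.3} is then applied once for each $b$, with $H_1=2^{k-1}(1+kQ^{k-1}|\bet_k|)-1/2$, yielding an error of $O(\log H_1)$ per value of $b$. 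The point is that this error is now \emph{weighted} by $q^{-1}|S(q;a_1+b,a_k)|$, so the total error is $q^{-1}\sum_b|S(q;a_1+b,a_k)|\cdot O(\log H_1)$, and Lemma~\ref{L2.2} bounds the sum by $q^{3/2}$. This is where the $q^{1/2}$ appears---from the \emph{mean value} of $|S|$, not from a pointwise bound such as Lemma~\ref{L2.1}\eqref{L2.1a} (which would only give $q^{1-1/k}$), and not from any ``reassembly'' of absolute-value error terms into a Gauss sum, which is not possible since the $O(\log H)$ errors carry no oscillation. The final change of variables $h=b+jq$ then combines the sums over $b$ and over the Poisson frequencies $j$ into the single sum over $|h|\le H$. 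Your last paragraph gestures towards this route, but the phrase ``replacing $e(\alp_k n^k)$ by its behaviour modulo $q$'' is too vague to stand in for the orthogonality identity, and without it you cannot explain how the error acquires the weight $|S|$.
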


\begin{proof}
	By sorting the terms of $f(\alp_1, \alp_k;Q)$ into congruence classes modulo $q$ and encoding the congruence condition in an exponential sum, we find that
	\begin{align}\label{3.2}
		f(\alp_1, \alp_k;Q) &= \sum_{r=1}^q e \left( \frac{a_1 r + a_k r^k}{q}\right) \sum_{\substack{Q < n \le 2 Q \\ n \equiv r \mmod q}} e(\bet_1 n + \bet_k n^k) \nonumber\\
		& =  \frac1q \sum_{-q/2 < b \le q/2} S(q; a_1+b, a_k)f(\bet_1 - b/q,\bet_k;Q).
	\end{align}
	We treat the sum $f(\bet_1 - b/q, \bet_k;Q)$ by Lemma~\ref{L2.3}, where we take $$
		\phi(x) = (\bet_1 - b/q)x + \bet_k x^k
	$$
and set
	\begin{align*}
		H_1 = 2^{k-1}(1+kQ^{k-1}|\bet_k|) - 1/2.
	\end{align*}
	Then we have $|\phi'(x)| \le H_1$ for all $x \le 2 Q$ and thus Lemma~\ref{L2.3} yields
	\begin{align*}
		f(\bet_1 - b/q, \bet_k;Q) = \sum_{|j| \le H_1} I(\bet_1-b/q-j, \bet_k;Q) + O(\log H_1).
	\end{align*}
	Using this within \eqref{3.2} and applying Lemma~\ref{L2.2} in the error term yields
	\begin{align*}
		f(\alp_1, \alp_k;Q) &= \frac{1}{q} \sum_{-q/2 < b \le q/2} \sum_{|j| \le H_1} S(q; a_1+ b+jq, a_k) I(\bet_1 - (b+jq)/q, \bet_k;Q) \\&\qquad+ O(q^{1/2}\log H_1).
	\end{align*}
	The proof is complete upon making the change of variables $b+qj=h$, noting that under the summation conditions this is in fact a bijection into the set of integers $h$ satisfying $-H < h  \le H$ where $H= q(H_1+1/2)$.
\end{proof}

We now distinguish two cases according to which term in $H$ is larger. Suppose first that $k|\bet_k|Q^{k-1}>1$, so that
\begin{align}\label{3.3}
	1 \ll H/q \ll |\bet_k|Q^{k-1}.
\end{align}
In such a situation, we discern from Lemma~\ref{L2.4}\eqref{L2.4b} and Lemma~\ref{L2.2} that
\begin{align*}
	\sum_{|h| \le H} S(q; a_1+h, a_k) I(\bet_1 -h/q, \bet_k;Q) &\ll (Q^{k-2}|\bet_k|)^{-1/2}  \left(\frac{H}{q}+1\right)  \sum_{a=1}^q |S(q; a, a_k)| \\
	&\ll q^{3/2}Q^{k/2}|\bet_k|^{1/2},
\end{align*}
where in the last step we used \eqref{3.3}.
Thus, in this situation, we find that
\begin{align}\label{3.4}
	f(\alp_1, \alp_k;Q) \ll q^{1/2}(Q^{k/2}|\bet_k|^{1/2} + \log H) \ll q^{1/2+\eps}Q^{k/2}|\bet_k|^{1/2},
\end{align}
which is satisfactory for the purposes of Theorem~\ref{T1.1}. 

It remains to study the behaviour of $f(\alp_1, \alp_k;Q)$ when $k |\bet_k| Q^{k-1} \le 1$, or in other words,
\begin{align}\label{3.5}
	H \ll q.
\end{align}
Set $d = (a_1+h, q)$ and $e =(a_1+h)/d$. In this notation, we have $h=de-a_1$ and $(e, q/d)=1$, and the conclusion of Lemma~\ref{L3.1} reads
\begin{align}\label{3.6}
	& f(\alp_1, \alp_k;Q)  \notag \\ &= q^{-1} \sum_{d|q} \sum_{\substack{e \in \Z \\ |de-a_1| \le H\\ (e,q/d)=1}} S(q; de, a_k) I\left(\bet_1 - \frac{de-a_1}{q}, \bet_k; Q\right) + O(q^{1/2 + \eps}).
\end{align}
We expect the sum on the right hand side of \eqref{3.6} to be dominated by the terms corresponding to small values of $h$. In particular, whenever $|h| \le d/2$ we have $|e-a_1/d| \le 1/2$ and hence $e=\llbracket a_1/d \rrbracket$. These terms will form our main term.
Write
\begin{align*}
	E(q, \ba; \bbet) =  q^{-1} \sum_{d|q} \sum_{\substack{e \in \Z \\ d/2 < |de-a_1| \le H}} \left|S(q; de, a_k) I\left(\alp_1 - \frac{de}{q}, \bet_k;Q\right)\right|
\end{align*}
for the sum over all the remaining terms where $h > d/2$. Then \eqref{3.6} may be rephrased as
\begin{align}\label{3.7}
	f(\alp_1, \alp_k; Q)&=  q^{-1} \sideset{}{^\dag}\sum_{d|q} S(q; d \llbracket a_1/d \rrbracket, a_k) I\left(\alp_1-\frac{\llbracket a_1/d \rrbracket}{q/d}, \bet_k; Q\right) \nonumber\\
	&\qquad + O\big(E(q, \ba; \bbet) + q^{1/2+\eps}\big).
\end{align}
Thus, it suffices to bound $E(q, \ba; \bbet)$.
By Lemma~\ref{L2.5} we see that
\begin{align}\label{3.8}
	E(q, \ba; \bbet) \ll q^{-1}(1+Q^k|\bet_k|)^{1/2} \sum_{d|q}\sum_{\substack{e \in \Z \\ d/2 < |de-a_1| \le H}}  \frac{|S(q; de, a_k)|}{|\alp_1-de/q|}.
\end{align}
Now, the condition $de \neq a_1$ together with our bound $|\bet_1| \le (2q)^{-1}$ implies that
\begin{align}\label{3.9}
	\left|\alp_1 - \frac{de}{q}\right| \ge \frac{|a_1-de|}{q} - |\bet_1| \ge \frac{|a_1-de|}{q} - \frac{1}{2q} \ge \frac{|a_1-de|}{2q}.
\end{align}
Using this within \eqref{3.8} and applying Lemma~\ref{L2.1}\eqref{L2.1b} yields the bound
\begin{align}\label{3.10}
	E(q, \ba; \bbet) &\ll (1 + Q^k|\bet_k|)^{1/2} \sum_{d|q}\sum_{\substack{e \in \Z \\ d/2 < |de-a_1| \le H}}\frac{|S(q; de, a_k)| }{|a_1-de|} \nonumber\\
	 &\ll q^{1/2+\eps}(1 + Q^k|\bet_k|)^{1/2} \sum_{d|q}d \sum_{\substack{e \in \Z \\ d/2 < |de-a_1| \le H}}|a_1-de|^{-1}  \nonumber\\
	&\ll q^{1/2+\eps}(1 + Q^k|\bet_k|)^{1/2},
\end{align}
where in the last step we used \eqref{3.5} together with standard bounds for the divisor function. The proof of \eqref{3.1} under the assumption \eqref{3.5} is now complete upon inserting  \eqref{3.10} into \eqref{3.7}, and the unconditional statement follows upon combining this with the bound \eqref{3.4}.

The second statement of Theorem~\ref{T1.1} is proved in a similar manner, and we only briefly detail the changes that need to be effected. Here, we do not need to consider a dyadic dissection of the interval, so all of our arguments will involve the exponential sum $f(\alp_1, \alp_k)$ and integral $I(\bet_1, \bet_k)$ instead of their dyadic analogues $f(\alp_1, \alp_k;Q)$ and  $I(\bet_1, \bet_k;Q)$, and will have $P$ instead of $Q$. We now observe that in the proof of Lemma~\ref{L3.1}, the condition \eqref{1.8} implies that
$$
	|\phi'(x)| = |\bet_1 - b/q + k \bet_k x^{k-1}| \le \frac{1}{2q} + \frac12 + \frac{1}{4q} \le 2.
$$
We may thus take $H_1=2$. The argument then proceeds as above, with the difference that we may skip the discussion of the case \eqref{3.3} and can continue directly with the hypothesis \eqref{3.5}. From this point we arrive, \emph{mutatis mutandis}, at  \eqref{3.7}. In order to bound the error term $E(q, \ba; \bbet)$ we now note that
\eqref{3.9} and \eqref{1.8} combine to show that, for $1 \le x \le P$, one has
$$\left|\alp_1 - \frac{de}{q} + k \bet_k x^{k-1}\right| \ge \left|\alp_1 - \frac{de}{q}\right| - \frac{1}{4q} \ge \frac{|a_1-de|}{4q}.$$ It thus follows from Lemma~\ref{L2.4}\eqref{L2.4a} that \eqref{3.8} can be replaced by
\begin{align*}
	E(q, \ba; \bbet) \ll \sum_{d|q}\sum_{\substack{e \in \Z \\ d/2 < |de-a_1| \le H}} \frac{|S(q; de, a_k)|}{|a_1-de|},
\end{align*}
and the desired bound $E(q, \ba; \bbet) \ll q^{1/2+\eps}$ follows as above. This completes the proof of Theorem~\ref{T1.1}. Moreover, Corollary \ref{C1.2}\eqref{C1.2a} is now immediate, and part \eqref{C1.2b} follows easily upon applying Theorem~7.3 of \cite{V:HL} and Lemma~\ref{L2.1}\eqref{L2.1a} within Theorem~\ref{T1.1}.

We now turn to the proof of Theorem~\ref{T1.3}. When $d \neq (a_1,q)$, the fractions $a_1/q$ and $d\llbracket a_1/d \rrbracket/q$ are distinct, and thus the latter one corresponds to a non-optimal rational approximation to $\alp_1$. In particular, we have $|\alp_1 - q^{-1}d\llbracket a_1/d \rrbracket| \ge 1/(2q)$. Thus, we can apply Lemma~\ref{L2.5} within the sum \eqref{1.9} much as above, and the statement of Theorem~\ref{T1.3}\eqref{T1.3a} follows immediately.

For the second statement of the theorem, we begin by observing  that the hypotheses imply that the sum in Theorem~\ref{T1.1} has exactly two main terms, corresponding to the values $d=1$ and $d=q$, respectively. We will focus on the latter. Note that when $\bet_k = 0$, we can explicitly compute
\begin{align*}
	I(\alp_1, 0) = \int_0^P e(\alp_1 x) \d x = \frac{e(\alp_1 P) - 1}{2 \pi i \alp_1} = e\left(\frac{\alp_1 P}{2}\right) \frac{\sin (\pi \alp_1 P)}{\pi \alp_1}.
\end{align*}
When $\|\alp_1 P \| > \del$, we have $2\del \le |\sin (\pi \alp_1 P)| \le 1$, and thus
\begin{align*}
	\frac{4q\del}{3\pi} \le \frac{2\del}{\pi |\alp_1|} \le |I(\alp_1, 0)| \le \frac{1}{\pi |\alp_1|} \le \frac{2q}{\pi}
\end{align*}
under our assumption that $1/(2q) \le \alp_1 \le 3/(2q)$. Thus, the main term corresponding to $d=q$ is given by
\begin{align*}
	q^{-1} |S(q; 0, a_k) I(\alp_1, 0)| \ge \frac{4 \del}{3 \pi}|S(q; 0, a_k)|.
\end{align*}
Finally, we note that when $q=p^k$, Lemma~4.4 in \cite{V:HL} shows that we have $|S(q; 0,a_k)| = q^{1-1/k}$, which implies the result.

\section{Additional lemmata for the proof of Theorem~\ref{T1.4}}
For the proof of our second main result, we need some more detailed information about cubic complete exponential sums.
\begin{lemma}\label{L4.1}
	Let $q \in \N$ and set
	$$
		 q_2 = \prod_{\substack{p^t\|q\\ t=1\text{ or }2}} p^t,\quad q_3=\prod_{\substack{p^t\|q\\t\ge 3}} p^t, \quad\kappa(q) =q_2^{1/2}q_3^{1/3}.
	$$
	Furthermore, suppose that $(q,a)=1$. Then
	$$
		S_{1,3}(q;b,a)\ll q^{1+\eps}\kappa(q)^{-1}.
	$$
\end{lemma}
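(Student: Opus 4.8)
The plan is to prove the bound $S_{1,3}(q;b,a)\ll q^{1+\eps}\kappa(q)^{-1}$ by reducing to prime power moduli via multiplicativity, then estimating each local factor separately according to the exponent.

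\textbf{Step 1: Multiplicativity.} First I would record the standard fact that complete exponential sums of the shape $S_{1,3}(q;b,a)$ factor multiplicatively in $q$: if $q = q' q''$ with $(q',q'')=1$, then writing $x \equiv q'' u + q' v \pmod q$ and using $\overline{q''}$, $\overline{q'}$ for inverses, one has $S_{1,3}(q;b,a) = S_{1,3}(q'; b\,\overline{q''}, a\,\overline{(q'')^3}) \, S_{1,3}(q''; b\,\overline{q'}, a\,\overline{(q')^3})$ up to a permutation of the coefficients coprime to the respective moduli. Since the bound we want depends on $b$ only through the uniformity (it should hold for all $b$), and the coprimality $(q,a)=1$ is preserved by passage to each prime power factor, it suffices to prove $S_{1,3}(p^t; b, a) \ll p^{t(1+\eps)} \kappa(p^t)^{-1}$ for each prime power $p^t \| q$, with an implied constant independent of $b$; the general bound then follows by taking the product over $p^t \| q$ and noting $\kappa$ is multiplicative.

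\textbf{Step 2: The local estimates.} For $t=1$, i.e. $q=p$, we want $S_{1,3}(p;b,a) \ll p^{1/2+\eps}$, uniformly in $b$; this is exactly the Weil bound for the cubic character sum $\sum_{x \bmod p} e_p(ax^3 + bx)$ with $p \nmid a$ (an elliptic or lower-genus curve, genus $\le 1$), giving $\ll p^{1/2}$ — cleaner than the stated bound and no $\eps$ needed. For $t=2$, i.e. $q=p^2$, we want $S_{1,3}(p^2;b,a) \ll p^{1+\eps} \cdot p^{-1} \cdot p^2 = p^{2+\eps} p^{-1}$... let me recompute: $\kappa(p^2) = (p^2)^{1/2} = p$, so the target is $p^{2+\eps}/p = p^{1+\eps}$, and again the sharper truth is $S_{1,3}(p^2;b,a) \ll p$, which one gets by the classical device of writing $x = y + p z$ with $y$ running mod $p$ and $z$ mod $p$: the sum over $z$ vanishes unless $3ay^2 + b \equiv 0 \pmod p$, which has $O(1)$ solutions $y$ mod $p$ (since $p \nmid 3a$, assuming $p \neq 3$; the prime $p=3$ is handled separately by a direct count), leaving $\ll p$. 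For $t \ge 3$, i.e. $q_3$-factors, we want $S_{1,3}(p^t;b,a) \ll p^{t(1+\eps)} \cdot p^{-t/3}$; here one iterates the same lifting argument: writing $x = y + p^{\lceil t/2 \rceil} z$ reduces an exponent-$t$ sum to roughly $p^{\lceil t/2 \rceil}$ times an exponent-$\lceil t/2 \rceil$ sum, or more efficiently one uses the standard stationary-phase bound for complete cubic sums (as in Lemma~4.4 of \cite{V:HL}, which already appears in the paper), yielding $S_{1,3}(p^t;b,a) \ll p^{2t/3}$ for $t \ge 3$; since $p^{2t/3} = p^t \cdot p^{-t/3}$ this is precisely the claimed local bound (indeed better than $p^{t(1+\eps)}\kappa(p^t)^{-1}$ only up to the harmless $p^{t\eps}$).

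\textbf{Step 3: Assembling and the $\eps$.} Multiplying the local bounds over all $p^t \| q$ gives $S_{1,3}(q;b,a) \ll \prod_{p^t\|q} (\text{local bound})$. Grouping the $t=1,2$ factors we get a contribution $\ll q_2^{1/2} \cdot (\text{number-of-such-primes factor})$, and the $t \ge 3$ factors give $\ll q_3^{2/3} = q_3 q_3^{-1/3}$; since $q_2^{1/2} q_3^{2/3} = q_2^{1/2} q_3 q_3^{-1/3} \le q \cdot q_2^{-1/2} q_3^{-1/3} = q/\kappa(q)$, and the accumulated product of implied constants over the at most $O(\log q / \log\log q)$ distinct primes dividing $q$ is absorbed into $q^\eps$ (this is where the stated $\eps$ comes from — the Weil/stationary-phase bounds themselves are $\eps$-free), we obtain $S_{1,3}(q;b,a) \ll q^{1+\eps}\kappa(q)^{-1}$. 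The only genuinely delicate points are the special prime $p=3$ (where $3 \nmid 3a$ fails), which I would dispatch by a separate elementary computation of $S_{1,3}(3^t; b, a)$ showing it still satisfies the bound, and making sure the lifting argument for $t \ge 3$ is set up so the recursion terminates correctly at $t \in \{1,2\}$; I expect the bookkeeping in this recursion, rather than any real analytic difficulty, to be the main obstacle.
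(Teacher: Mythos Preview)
Your proposal is correct and follows essentially the same route as the paper: multiplicativity to reduce to prime powers, the Weil bound for $t=1$, the lifting $x=y+pz$ to get $\ll p$ for $t=2$ (with $p=2,3$ handled trivially), and the standard $p^{2t/3}$ bound for $t\ge 3$. The only cosmetic difference is that the paper invokes Lemma~\ref{L2.1}\eqref{L2.1a} (i.e.\ Theorem~7.1 of \cite{V:HL}) directly on $q_3$ as a whole rather than recursing prime-by-prime via Lemma~4.4 of \cite{V:HL}; also, your multiplicativity formula should have $a\,\overline{q''}$ rather than $a\,\overline{(q'')^3}$, though as you note this is irrelevant since only coprimality matters.
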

\begin{proof}
	Suppose that $q=rs$ with $(r,s)=1$, and write $a=a_2r+a_1s$ and $b=b_2r+b_1s$. Then it follows by standard arguments that
	$$
		S_{1,3}(q;b,a)=S_{1,3}(r;b_1,a_1)S_{1,3}(s;b_2,a_2).
	$$
	We are therefore free to restrict our focus to prime power moduli. By Lemma~\ref{L2.1}\eqref{L2.1a} when $(q_3,a)=1$ we have
	$$
		S_{1,3}(q_3;b,a) \ll q_3^{2/3+\eps}.
	$$
	Thus it suffices to show that whenever $(a,p)=1$ and $t=1$ or $2$ we have
	$$
		S_{1,3}(p^t;b,a)\ll p^{t/2}.
	$$
	When $t=1$ this follows from Corollary II.2F of Schmidt \cite{WS76}.  Suppose $t=2$.  Then when $p=2$ or $3$ this bound is trivial, so we can suppose that $p>3$.  Thus
	\begin{align*}
		S_{1,3}(p^2;b,a) &= \sum_{v=0}^{p-1} \sum_{u=1}^p  e\left(\frac{b(pv+u)+a(pv+u)^3}{p^2}	\right) \\
		&= \sum_{u=1}^p e\left( \frac{bu+au^3}{p^2} \right) \sum_{v=0}^{p-1} \left(	\frac{b+3au^2}{p}v 	\right)\\
		& = p\sum_{\substack{u=1\\	b+3au^2\equiv 0 \mmod p}}^p e\left( \frac{bu+au^3}{p^2} 	\right).
	\end{align*}
	The congruence $b+3au^2\equiv 0 \mmod p$ has at most two solutions, and it follows that $|S_{1,3}(p^2;b,a)| \le 2p$. The claim of the lemma follows upon collecting our results.
\end{proof}

\begin{lemma}
	\label{L4.2}
	Let $q\in\mathbb N$ be odd and $c\in\mathbb Z$ with $(q,c)=1$.  Then there exists $a\in\mathbb Z$ such that $(q,a)=1$ and for every $\eps>0$ we have
	$$
		|S_{1,3}(q;a-c,a)|\gg q^{1/2-\eps}.
	$$
\end{lemma}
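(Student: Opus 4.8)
The plan is to reduce to prime power moduli via the twisted multiplicativity of $S_{1,3}$, and then to treat those either by an exact evaluation of a pure cubic Weyl sum or by a second moment argument. First I would record the reduction. Writing $q=\prod_i p_i^{t_i}$ and iterating the factorisation of $S_{1,3}$ at coprime moduli used in the proof of Lemma~\ref{L4.1} (following it by a rescaling $x\mapsto s^{-1}x$ of the summation variable, which removes the spurious factor $s^{2}$ that multiplicativity otherwise attaches to the cubic coefficient), one finds that this factorisation preserves the special shape $b=a-c$: the local sum at $p_i^{t_i}$ is again of the form $S_{1,3}(p_i^{t_i};a_i-c_i,a_i)$, where $c_i$ is a fixed unit modulo $p_i$ depending only on $c$ and $q$. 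Hence it suffices to show that \emph{for every odd prime power $p^t$ and every $c'$ coprime to $p$ there is an $a'$ coprime to $p$ with $|S_{1,3}(p^t;a'-c',a')|\ge\gamma\, p^{t/2}$, for some absolute constant $\gamma>0$}. Granting this, the Chinese Remainder Theorem yields an $a$ with $(a,q)=1$ for which $|S_{1,3}(q;a-c,a)|=\prod_i|S_{1,3}(p_i^{t_i};a_i-c_i,a_i)|\ge\gamma^{\omega(q)}q^{1/2}$, and since the number of prime factors of $q$ is $\ll\log q/\log\log q$ we obtain $\gamma^{\omega(q)}=q^{-o(1)}\gg_\eps q^{-\eps}$, which is the assertion.

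For the local statement when $p>3$ and $t\not\equiv1\pmod3$, I would simply take $a'\equiv c'\pmod{p^t}$, so that the linear term disappears and $S_{1,3}(p^t;a'-c',a')=\sum_{x\bmod p^t}e(a'x^3/p^t)$ is a pure cubic Weyl sum. The standard recursion — at each stage the sum over one coordinate forces $p\mid x$, so one substitutes $x=pw$ — evaluates this exactly, giving $|S_{1,3}(p^t;a'-c',a')|=p^{\lfloor 2t/3\rfloor}\ge p^{t/2}$. The residues $t\equiv1\pmod3$ are excluded here precisely because then the recursion terminates with a cubic Gauss sum modulo $p$, which may be small or zero.

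The remaining cases — $p>3$ with $t\equiv1\pmod3$ (including $t=1$), and $p=3$ with any $t$ — I would handle by a second moment. Opening the square,
\[
	\sum_{a\bmod p^t}|S_{1,3}(p^t;a-c',a)|^2=p^t\bigl(p^t+W\bigr),\qquad
	W=\sum_{\substack{x\ne y\bmod p^t\\ (x-y)(x^2+xy+y^2+1)\equiv0}}e\!\Bigl(\frac{-c'(x-y)}{p^t}\Bigr).
\]
When $p=3$ the polynomial $x^2+xy+y^2+1$ is a unit modulo $3$ for every $x,y$, so there are no off-diagonal solutions and $W=0$ identically. When $p>3$, I would stratify the off-diagonal solutions by $j=v_p(x-y)$; writing $x-y=p^jd'$ with $(d',p)=1$, the condition becomes a congruence modulo $p^{t-j}$ for the quadratic $3y^2+3p^jd'y+p^{2j}d'^2+1$ in $y$, whose discriminant is a unit mod $p$, so Hensel lifting gives a solution count of the form $(\text{constant})\cdot p^{j}$ independent of $d'$ for $j\ge1$, whereupon the twist collapses the sum over $d'$ to a Ramanujan sum $\sum_{(d',p)=1}e(-c'd'/p^{t-j})$ that vanishes unless $t-j\le1$. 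This leaves $W\ll p^{t-1}$ when $t\ge2$ and $W=-1+O(\sqrt p)$ when $t=1$ (the surviving term being a mixed additive--multiplicative character sum, bounded by Weil), the degenerate locus $d'^2\equiv-4\pmod p$ being absorbed into the error. Hence $p^t+W>0$ once $p$ exceeds an absolute constant (and trivially for $p=3$), so $\sum_{a\bmod p^t}|S|^2\gg p^{2t}$; the terms with $p\mid a$ contribute nothing (a factor $\sum_{v\bmod p}e(-c'v/p)=0$ appears), and pigeonholing over the $\le p^t$ admissible $a$ produces $a'$ coprime to $p$ with $|S_{1,3}(p^t;a'-c',a')|\gg p^{t/2}$.

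This leaves only the finitely many small primes $p>3$ with $t\equiv1\pmod3$ below the threshold of the previous step; these form a finite set (the case $t=1$ can also be cleared by the $t=1$ second moment for $p$ not too small and by inspection otherwise), so they affect only the value of the absolute constant $\gamma$. The main obstacle is the second moment step: one needs $W\ll p^{t-1}$ \emph{uniformly in $t$}, with an absolute implied constant, so that $p^t+W$ is positive for a prime threshold that does not depend on $t$. The delicate point is the degenerate locus $d'^2\equiv-4\pmod p$, where the quadratic in $y$ acquires a repeated root and Hensel lifting stalls; a term-by-term estimate there is far too lossy, and one must stratify once more by the $p$-adic valuation of the discriminant and sum a geometric series to recover the required $O(p^{t-1})$.
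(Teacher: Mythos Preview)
Your overall strategy --- reduce to prime powers, take $a\equiv c$ when $t\not\equiv 1\pmod 3$, and use a second moment otherwise --- is sound, but the route diverges from the paper at the prime powers $p^t$ with $p>3$ and $t\equiv 1\pmod 3$, $t\ge 4$. There the paper does \emph{not} compute a second moment modulo $p^t$; instead it makes the specific choice $a\equiv c+p^{2v}(a'-c)\pmod{p^t}$ (where $t=3v+1$) and shows by a short iteration that
\[
	S_{1,3}(p^t;a-c,a)=p^{2v}\,S_{1,3}(p;a'-c,a),
\]
thereby reducing the problem to the case $t=1$ already handled. This completely sidesteps your degenerate-locus analysis. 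For $p=3$, your observation that $x^2+xy+y^2+1$ is always a unit modulo $3$ gives $W=0$ and is cleaner than the paper's case-by-case treatment; the paper instead uses the recursion of \cite[Lemma~4.4]{V:HL} together with an explicit evaluation of $S_{1,3}(9;0,a)$.

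There is one point where your argument needs tightening. You write that the cases not covered by the second moment ``form a finite set''. As stated this is false: for each small prime $p$ below your threshold there are infinitely many exponents $t\equiv 1\pmod 3$. What actually makes your approach go through is that the bound $|W|\le 2p^{t-1}+O(p^{t/2})$ can be made \emph{uniform in $t$ and $p$} (with absolute implied constant): the strata $1\le j\le t-2$ vanish by Ramanujan sums, the stratum $j=t-1$ contributes at most $2p^{t-1}$, and the degenerate $j=0$ contribution (when $-4$ is a square modulo $p$) is $O(p^{t/2})$ because the inner additive character over the free variable again collapses to a Ramanujan sum. With this in hand, $p^t+W\ge p^{t-1}(p-2)-O(p^{t/2})>0$ for \emph{every} odd $p>3$ once $t\ge 4$, so only $(p,t)$ with $t=1$ and $p\in\{5,7\}$ remain, and those are dispatched exactly as in the paper. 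You should make this uniformity explicit rather than appeal to a ``finite set''.
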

\begin{proof}
	Before embarking on the argument, we note that the lemma is trivial for $q=1$, and hence we may assume $q>1$ and consequently $c \neq 0$.
	Again we use the multiplicative property of the Gauss sum as described at the start of the proof of the previous lemma. Thus it suffices to establish the lemma for prime powers.
	\par

	Let $p$ be an odd prime, $t\in\mathbb N$ and $c\in\mathbb Z$ be such that $p\nmid c$. The lemma follows if we are able to show that there is an absolute constant $\xi>0$ having the property that
	\begin{align*}
		|S_{1,3}(p^t;a-c,a)|\ge \xi p^{t/2}
	\end{align*}
	for all odd prime powers $p^t$.
	
	First we deal with the case $t=1$. Clearly, the desired statement follows if we can show that
	\begin{align}\label{4.1}
		\sum_{a=1}^{p-1}|S_{1,3}(p;a-c,a)|^2\ge \xi p^2.
	\end{align}
	In general, we have
	\begin{align*}
		\sum_{a=1}^{p-1} |S_{1,3}(p;a-c,a)|^2 =
		p \sum_{\substack{m,n=1\\n+n^3\equiv m+m^3\mmod p}}^{p} e\left(\frac{c(n-m)}{p}\right) - \left|\sum_{m=1}^{p} e\left(\frac{cm}{p}\right)\right|^2.
	\end{align*}
	Clearly, the second sum vanishes.  Hence
	\begin{align}\label{4.2}
	\sum_{a=1}^{p-1} |S_{1,3}(p;a-c,a)|^2
	&= p \sum_{m=1}^{p} \sum_{\substack{n=1\\n+n^3\equiv m+m^3\mmod p}}^{p} e\left(\frac{c(n-m)}{p}\right) \nonumber\\
	& =p^2 + p\sum_{m=1}^p \sum_{\substack{h=1\\	3m^2+3hm+h^2+1\equiv 0\mmod p}}^{p-1} e\left(	\frac{ch}p 	\right).
	\end{align}
	When $p=3$ the congruence in the inner sum becomes $h^2\equiv -1\pmod 3$ which is insoluble, so that sum vanishes, and \eqref{4.2} reads
	$$
		\sum_{a=1}^{2} |S_{1,3}(3;a-c,a)|^2\\	= 3^2.
	$$
	\par
	
	When $p>3$, on the other hand, we use that
	$$
		12(3m^2+3hm+h^2+1)=  	(6m+3h)^2+3h^2+12,
	$$
	whence we obtain
	$$
		p\sum_{m=1}^p \sum_{\substack{h=1\\	3m^2+3hm+h^2+1\equiv 0\mmod p}}^{p-1} e\left(	\frac{ch}p 	\right) = p\sum_{h=1}^{p-1} e\left(\frac{ch}p	\right)\left(1+\left(\frac{-3h^2-12}{p}	\right)_L\right),
	$$
	where $\big(\frac{a}{p}\big)_L$ denotes the Legendre symbol.
	Thus, upon extending the sum on the right hand side to include the term $h=p$ also, while noting that $-3p^2-12 \equiv 2^2 (-3) \mmod p$, we obtain
	\begin{align}\label{4.3} 
		\sum_{a=1}^{p-1} |S_{1,3}(p;a-c,a)|^2 = p^2 &- p\left(1+\left(	\frac{-3}{p}\right)_L \right) \notag \\
		&+ p\sum_{h=1}^{p} e\left(\frac{ch}p \right)\left( \frac{-3h^2-12}{p} \right)_L.
	\end{align}
	At this point, we see from Theorem~II.2G in \cite{WS76} that
	\begin{align}\label{4.4}
		\sum_{h=1}^{p} e\left(\frac{ch}p \right)\left( \frac{-3h^2-12}{p} \right)_L \le 2 p^{1/2},
	\end{align}
	whence we find that
	\begin{align*}
		\sum_{a=1}^{p-1} |S_{1,3}(p;a-c,a)|^2\ge p^2-2p-2p^{3/2}.
	\end{align*}
	When $p>7$, this is already sufficient for \eqref{4.1}, so it remains to analyse the cases when $p=5$ and $p=7$.\par
	
	Let now $p=5$. Since $\left(\frac{-3}{5}\right)_L=-1$, the desired bound \eqref{4.1} follows with $\xi = 1 - 2/\sqrt 5$ upon deploying \eqref{4.4} within the expression in \eqref{4.3}.
	Finally, consider the case $p=7$. Since $\left(\frac{-3}{7}\right)_L=1$, we have in \eqref{4.3} that
	$$
		\sum_{a=1}^{6} |S_{1,3}(7;a-c,a)|^2 = 42 + 7\sum_{h=1}^{6} e\left(\frac{ch}7\right)\left(\frac{-3h^2-12}{7}	\right)_L,
	$$
	where we observed that the summand corresponding to $h=7$ is $1$.
	The remaining sum over $h$ is
	\begin{align*}
		&-e\left(\frac{c}{7}\right)+e\left(	\frac{2c}{7}\right)-e\left(	\frac{3c}{7}	\right)-e\left( \frac{4c}{7} \right)+e\left(\frac{5c}{7}\right)-e\left(\frac{6c}{7}	\right),
	\end{align*}
	which has absolute value smaller than $6$ for all $c \in \{1, \ldots, 6\}$.  It follows that \eqref{4.1} holds in this case also.
	Thus whenever $t=1$ there is at least one $a$ satisfying the necessary requirements.
	\par
	
	Now consider the case $t\ge 2$.  Suppose first that $p>3$.  Write $t=3v+u$ with $v\ge0$ and $1\le u\le 3$.  When $u\not=1$ choose $a=c$.  Then by iteratively applying Lemma~4.4 of \cite{V:HL} we find that
	$$
		S_{1,3}(p^t;a-c,a) = S_{1,3}(p^t;0,a)= p^{2v+u-1}\ge p^{t/2},
	$$
	since in the notation of that lemma and (2.25) {\it ibidem} we have $l=t>1=\gamma$.
	\par
	
	When $u=1$, we may now assume that $v\ge 1$. Choose $a$ so that $a\equiv c+p^{2v}(a'-c)\pmod{p^t}$ where $a'$ is at our disposal.  Put $m=p^{3v}x+y$.  Then our sum is
	\begin{align*}
		S_{1,3}(p^{3v+1};p^{2v}(a'-c),a) = \sum_{y=1}^{p^{3v}} \sum_{x=0}^{p-1} e\left(\frac{3ay^2x}{p} + \frac{(a'-c)y}{p^{v+1}} + \frac{ay^3}{p^{3v+1}}\right).
	\end{align*}
	The sum over $x$ is $0$ unless $p|y$ in which case it sums to $p$, and thus the above is
	\begin{align*}
		p\sum_{z=1}^{p^{3v-1}} e\left(\frac{(a'-c)z}{p^{v}} + \frac{az^3}{p^{3(v-1)+1}}	\right) = p^2S_{1,3}(p^{3(v-1)+1}; p^{2v-2}(a'-c), a).
	\end{align*}
	Iterating this argument gives
	$$
		S_{1,3}(p^t;a-c,a) = p^{2v}S_{1,3}(p;a'-c,a).
	$$
	Now we choose $a'$ in accordance with the case $t=1$ above.  Thus
	$$
		|S_{1,3}(p^t;a-c,a)|\ge \xi p^{2v+1/2}\ge \xi p^{t/2}.
	$$
	\par
	When $p=3$ we can apply a slightly modified argument.  Now in the notation (2.25) of  \cite{V:HL} we have $\gamma=2$.  When $t=3v+u$ with $u=2$ or $3$ we again take $a=c$ and obtain, by Lemma~4.4 {\it ibidem},
	$$
		S_{1,3}(3^t;a-c,a) = 3^{2v}S_{1,3}(3^u;0,a)
	$$
	and this is $3^{2v+2}$ when $u=3$.  When $u=2$, we have instead
	$$
		S_{1,3}(3^u;0,a) = 3+6\cos(2 \pi a/9).
	$$
	Since $a$ is not divisible by $3$, the cosine cannot be $-\frac12$.  Thus
	$$	
		|S_{1,3}(3^t;a-c,a)|\ge \xi 3^{t/2}.
	$$
	When $u=1$ we follow the recipe for general $p$ and obtain
	$$
		S_{1,3}(3^t;a-c,a) = 3^{2v}S_{1,3}(3;a'-c,a)
	$$
	and again appeal to the case $t=1$.
\end{proof}

\section{Prolegomena to the proof of Theorem~\ref{T1.4}}
\label{S5}

\noindent Before proceeding to the various parts of the proof of Theorem~\ref{T1.4}, it is useful to review some measure theoretic aspects of approximation of real numbers by rational numbers. In view of the periodicity of our functions, we concentrate on the interval $[0,1]$.

As is well known, Dirichlet's approximation theorem states that every real number $\gamma$ has the property that there are arbitrarily large $q\in\mathbb N$ and $c\in\mathbb Z$ such that $(q,c)=1$ and $|\gamma-c/q|\le q^{-2}$. Moreover, it follows from Khinchine's theorem (see e.g. Theorem II.2B in \cite{WS80}) that for almost every such $\gam$ there is a positive number $C(\gamma)$ such that whenever $(q,c)=1$ we have
\begin{align}\label{5.1}
	\frac{C(\gamma)}{q^{2}(\log 2q)^{2}}\le \left| \gamma - \frac{c}{q}\right|.
\end{align}
In particular there is a subset $\Gamma$ of $(\mathbb R\setminus\mathbb Q)\cap[0,1]$ having these properties and with $\mes \Gamma =1$.

\par

For the upper bound when $k=3$ we need to refine this further.  Let $\Gamma_0$ denote the subset of $\Gamma$ with the property that for every $\delta>0$ and $\gamma\in\Gamma_0$ there are, in the notation of Lemma~\ref{L4.1}, at most a finite number of $q$ and $c$ with
\begin{align}\label{5.2}
	\left| \gamma-\frac{c}{q} \right|\le q_2^{-2-\delta}q_3^{-4/3-\delta}.
\end{align}
\begin{lemma}\label{L5.1}
	The set $\Gamma_0$ has full measure in $[0,1]$.
\end{lemma}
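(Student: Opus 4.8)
The plan is to run the convergence half of the Borel--Cantelli argument behind Khinchine's theorem, but with the measures of the approximating sets weighted according to the factorisation $q=q_2q_3$ introduced in Lemma~\ref{L4.1}. Fix $\delta>0$, put $\eta_q=q_2^{-2-\delta}q_3^{-4/3-\delta}$ for each $q\in\mathbb N$, and let
\[
	E_q(\delta)=\{\gamma\in[0,1]:\ |\gamma-c/q|\le\eta_q\ \text{for some }c\in\mathbb Z\}.
\]
Being a union of $O(q)$ intervals of length $2\eta_q$, this set has measure $\mes E_q(\delta)\ll q\eta_q=q_2^{-1-\delta}q_3^{-1/3-\delta}$, using $q=q_2q_3$. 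Moreover, for a fixed $q$ there are only finitely many integers $c$ with $|\gamma-c/q|\le\eta_q$, so a number $\gamma\in[0,1]$ satisfies \eqref{5.2} for infinitely many pairs $(q,c)$ if and only if $\gamma$ lies in $B_\delta:=\limsup_{q\to\infty}E_q(\delta)$.

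The crux is to show $\sum_{q\ge1}q_2^{-1-\delta}q_3^{-1/3-\delta}<\infty$. Here I would use that $q\mapsto(q_2,q_3)$ is a bijection from $\mathbb N$ onto the set of coprime pairs $(q_2,q_3)$ in which every prime factor of $q_2$ occurs to exponent at most $2$ while $q_3$ is cube-full; discarding the coprimality constraint only enlarges the sum, so it is bounded by
\[
	\Big(\sum_{n\ge1}n^{-1-\delta}\Big)\Big(\sum_{\substack{m\ge1\\ m\text{ cube-full}}}m^{-1/3-\delta}\Big).
\]
The first factor converges trivially. For the second, the number of cube-full integers up to $x$ is $O(x^{1/3})$, so by partial summation $\sum_{m\text{ cube-full}}m^{-s}$ converges for every $s>1/3$; taking $s=1/3+\delta$ shows the product is finite. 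Hence $\sum_q\mes E_q(\delta)<\infty$, and the Borel--Cantelli lemma yields $\mes B_\delta=0$.

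It remains to patch the pieces together. Since the exponents $2+\delta$ and $4/3+\delta$ increase with $\delta$, we have $E_q(\delta)\subseteq E_q(\delta')$, and hence $B_\delta\subseteq B_{\delta'}$, whenever $\delta'\le\delta$; therefore $\bigcup_{\delta>0}B_\delta=\bigcup_{n\ge1}B_{1/n}$ is a countable union of null sets and so is itself null. By the definition of $\Gamma_0$ this shows $\Gamma_0=\Gamma\setminus\bigcup_{n\ge1}B_{1/n}$, and since $\mes\Gamma=1$ we conclude $\mes\Gamma_0=1$. The only step that is not pure bookkeeping is the convergence of the double series, which rests on the sparseness of the cube-full integers (counting function $\ll x^{1/3}$); everything else is the standard Borel--Cantelli routine.
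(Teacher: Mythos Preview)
Your proof is correct and follows essentially the same Borel--Cantelli argument as the paper: both bound the measure of the $q$-th approximating set by $q_2^{-1-\delta}q_3^{-1/3-\delta}$ and exploit the sparseness of cube-full integers (counting function $\ll x^{1/3}$) to show that these measures are summable. Your presentation is in fact a bit tidier than the paper's, since you invoke Borel--Cantelli directly on the full series rather than bounding the tail by $N^{-\delta/2}$, and you make explicit the monotonicity in $\delta$ that reduces $\bigcup_{\delta>0}B_\delta$ to a countable union.
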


\begin{proof}
	Let $\Upsilon_0=\bigcup_{\delta>0}\Upsilon(\delta)$ where $\Upsilon(\delta)$ denotes the set of $\gamma \in [0,1]$ having the property that \eqref{5.2} holds for infinitely many pairs $(q,c)$.
	Let further $N\in\mathbb N$.  Then
	$$
		\Upsilon(\delta) \subseteq \bigcup_{q>N} \bigcup_{0\le c\le q} \left\{ \gamma : \left|\gamma-\frac{c}{q} \right|\le q_2^{-2-\delta}q_3^{-4/3-\delta} \right\}.
	$$
	Hence
	$$
		\mes \Upsilon(\delta) \le \sum_{q_2q_3>N}4q_2^{-1-\delta}q_3^{-1/3-\delta}.
	$$
	We have
	$$
		\sum_{q_3\le X}1 \le \sum_{\substack{r^3l\le X\\ l|r^2}} 1 \le \sum_{r\le X^{1/3}} d(r^2) \ll X^{1/3+\eps}.
	$$
	Therefore for any $Y>0$ we have
	\begin{align*}
		\sum_{q_3>Y} q_3^{-1/3-\delta} = \sum_{k=0}^{\infty} \sum_{2^kY<q_3\le 2^{k+1}Y} q_3^{-1/3-\delta} \ll Y^{-\delta/2},
	\end{align*}
	and so
	\begin{align*}
		\sum_{q_2q_3>N} 4q_2^{-1-\delta}q_3^{-1/3-\delta} &\ll \sum_{q_2\le N}  q_2^{-1-\delta} \sum_{q_3>N/q_2} q_3^{-1/3-\delta} + \sum_{q_2> N}  4q_2^{-1-\delta} \\
		&\ll \sum_{q_2\le N}  q_2^{-1-\del/2}N^{-\delta/2} + N^{-\delta}.
	\end{align*}
	Thus
	$$
		\mes \Upsilon(\delta) \ll N^{-\delta/2}
	$$
	and this holds for every $N\in\mathbb N$.
\end{proof}

\section{Theorem~\ref{T1.4} : The upper bound when $k=2$}\label{S6}

\noindent Let $\alpha\in \mathbb R$ and $\gam \in \Gam$.  By Dirichlet's theorem on diophantine approximation we can choose $a_2$, $q$ with $(a_2,q)=1$, $q\le Q$ and
$$
	\left|\alpha+\gamma-\frac{a_2}{q}\right|\le \frac1{q Q}.
$$
Then choose $a_1$ so that
$$
	\left|\alpha-\frac{a_1}{q}\right|\le\frac1{2q}.
$$
Set $\bet_1 = \alp-a_1/q$ and $\bet_2 = \alp+\gam-a_2/q$. Hence, by Theorem~8 of \cite{V:09}, we have
$$
	f_{1,2}(\alpha,\alpha+\gamma;Q) = q^{-1}S_{1,2}(q;a_1,a_2) I_{1,2}(\beta_1,\beta_2;Q) + O\big(Q^{1/2} \big).
$$
Since $(a_2,q)=1$ we have $|S_{1,2}(q;a_1,a_2)| \ll \sqrt{q}$ by classical bounds on the Gauss sum. Thus
$$
	f_{1,2}(\alpha,\alpha+\gamma;Q) \ll |I_{1,2}(\beta_1, \beta_2;Q)| q^{-1/2} + Q^{1/2}.
$$
Therefore, by Theorem~7.1 in \cite{V:HL} we have the bound
\begin{align}\label{6.1}
	f_{1,2}(\alpha,\alpha+\gamma;Q) \ll \frac{Q}{(q+q Q|\beta_1|+q Q^2|\beta_2|)^{1/2}} + Q^{1/2}.
\end{align}
We may assume that
\begin{align}\label{6.2}
	q\le \big(\textstyle{\frac12}C(\eps,\gamma)Q\big)^{1/(2+\eps)},
\end{align}
for otherwise in the denominator in \eqref{6.1} we have
$$
	q+q Q|\beta_1|+q Q^2|\beta_2| \ge\big(\textstyle{\frac12}C(\eps,\gamma)Q\big)^{1/(2+\eps)}
$$
trivially.
Since $\gam \in \Gam$, we infer from (\ref{5.1}) via \eqref{6.2} that
$$
	|\beta_1|\ge \left|\gamma-\frac{a_2-a_1}{q}\right|-|\beta_2| \ge \frac{C(\eps,\gamma)}{q^{2+\eps}} - \frac{1}{q Q}\ge \frac12C(\eps,\gamma) q^{-2-\eps},
$$
provided that $Q\ge 2/C(\eps,\gamma)$, which we may certainly assume. Thus, we find
$$
	q+q Q|\beta_1| \ge q+{\textstyle\frac12}Q C(\eps, \gam)q^{-1-\eps} \ge ({\textstyle\frac12}C(\eps,\gamma) Q)^{1/(2+\eps)}
$$
in this case as well, and \eqref{6.1} becomes
$$
	f_{1,2}(\alpha,\alpha+\gamma;Q) \ll_{\eps,\gamma} Q^{1-1/(4+2\eps)}.
$$
We conclude that if $\theta>\frac34$, then
$$
	Q^{-\theta} \sup_{\alpha\in[0,1)} |f_{1,2}(\alpha,\alpha+\gamma;Q)| \ll_{\theta,\gamma} 1
$$
as required.

\section{Theorem~\ref{T1.4} : The upper bound when $k=3$}

\noindent This follows the pattern set in \S\ref{S6}.  Again we use (\ref{5.1}), but now we suppose that $\gamma\in\Gamma_0$, and hence given $\gamma$ we may suppose that for any fixed $\delta>0$  the inequality (\ref{5.2}) holds for at most a finite number of $q$ and $c$.  It will be convenient to also suppose that $\delta$ is sufficiently small.  For such a $\gamma$ we show that for arbitrarily large $Q$ we have
\begin{align}\label{7.1}
	f_{1,3}(\alpha,\alpha+\gamma;Q) \ll_{\delta,\gamma} Q^{3/4+\delta}
\end{align}
uniformly for $\alpha\in\mathbb R$.\par

Let $\alpha\in \mathbb R$.  By Dirichlet's theorem on diophantine approximation we can choose $a_3$, $q$ with
\begin{align}\label{7.2}
	(a_3,q)= 1, \qquad  q\le Q^{3/2} \qquad \text{ and } \qquad  \left|\alpha+\gamma-\frac{a_3}{q}\right|\le \frac1{q Q^{3/2}}.
\end{align}
Then choose $a_1$ so that
\begin{equation*}
	-\frac1{2q}<\alpha-\frac{a_1}{q} \le\frac1{2q},
\end{equation*}
and let $\beta_3=\alpha+\gamma-a_3/q$ and $\beta_1=\alpha-a_1/q$.  
By Corollary \ref{C1.2}\eqref{C1.2a} we have
\begin{align}\label{7.3}
	f_{1,3}(\alpha,\alpha+\gamma;Q) &= q^{-1} \sideset{}{^\dag}\sum_{d|q} S_{1,3}(q;d\llbracket a_1/d\rrbracket,a_3)I_{1,3}\left(\alpha-\frac{d\llbracket a_1/d \rrbracket}{q},\beta_3; Q\right)\nonumber\\
	&\qquad	+ O\big(Q^{3/4+\eps}\big).
\end{align}

Since $(a_3,q)=1$, by Theorem~7.3 of \cite{V:HL} together with Lemma~\ref{L4.1} we have
\begin{align*}
	q^{-1}S_{1,3}(q;d\llbracket &a_1/d\rrbracket,a_3)I_{1,3}\left(\alpha-\frac{d\llbracket a_1/d \rrbracket}{q},\beta_3; Q\right) \\
	&\ll Q^{1+\eps} \kappa(q)^{-1} \left(1+Q\left|\alpha-\frac{d\llbracket a_1/d \rrbracket}{q}\right|+Q^3|\beta_3|\right)^{-1/3}.
\end{align*}
It follows that the contribution arising from those $d|q$ for which
\begin{align}\label{7.4}
	\kappa(q)^3\left(1+Q\left|\alpha-\frac{d\llbracket a_1/d \rrbracket}{q}\right|+Q^3|\beta_3|\right) \ge Q^{3/4-2\delta}
\end{align}
is satisfactory in view of \eqref{7.1}.

Thus it remains to deal with any possible terms for which \eqref{7.4} fails to hold.
Suppose that $d \neq (a_1,q)$ for some $d$ violating \eqref{7.4}, so that
$$
	\left|\alpha-\frac{d\llbracket a_1/d \rrbracket}{q}\right| \ge\frac1{2q}.
$$
In that case we would have
$$
	q_2^{1/2}Q = q_2^{3/2}q_3q^{-1}Q \le 2\kappa(q)^3 Q\left|\alpha-\frac{d\llbracket a_1/d \rrbracket}{q}\right|\le 2 Q^{3/4-2\delta},
$$
which is impossible for $Q$ large. Hence the only term in \eqref{7.3} that could possibly violate \eqref{7.4} is the one corresponding to $d=(q, a_1)$, for which
$$
	\alpha-\frac{d\llbracket a_1/d \rrbracket}{q}= \alp - \frac{a_1}{q}=\beta_1.
$$
For this term the negation of \eqref{7.4} reads
\begin{align}\label{7.5}
	\kappa(q)^3\big(1+Q|\beta_1|+Q^3|\beta_3|\big) < Q^{3/4-2\delta},
\end{align}
and we observe that in this instance
\begin{align}\label{7.6}
	q \leq q_2^{3/2}q_3 = \kappa(q)^3 \le Q^{3/4-2\delta}.
\end{align}

We assume there is such a term and show that it contradicts the assumption $\gamma\in\Gamma_0$. Since $\Gam_0 \subseteq \Gam$, we see from  (\ref{5.1}) and \eqref{7.2} that
$$
	\frac{C(\gamma)}{q^{2}(\log 2q)^2} <\left|\gamma-\frac{a_3-a_1}{q}\right|\le |\beta_1|+|\beta_3|\le |\beta_1| + q^{-1}Q^{-3/2}.
$$
Since by \eqref{7.6} we may suppose that $Q$ is large enough so that
$$
	q^{-1}Q^{-3/2}\le \frac{C(\gamma)}{2q^{2}(\log 2q)^2}\le \frac12\left|\gamma-\frac{a_3-a_1}{q}\right|,
$$
it follows from our assumption \eqref{7.5} that
\begin{align}\label{7.7}
	\left|\gamma-\frac{a_3-a_1}{q}\right| \le 2|\beta_1|\le 2\kappa(q)^{-3}Q^{-1/4-2\delta}.
\end{align}
We also have
$$
	\frac{C(\gamma)}{q^{2}(\log 2q)^2}\le 2\kappa(q)^{-3}Q^{-1/4-2\delta}
$$
and therefore
$$
	C(\gamma)Q^{1/4+2\delta}\le 2q_2^{1/2}q_3(\log 2q)^2 \le 2q(\log 2q)^2.
$$
Thus, as $Q$ can be taken large enough so that $(\log 2q)^2 < \frac{1}{2}C(\gamma)Q^{2\delta}$, we have $q>Q^{1/4}$. By (\ref{7.6}) we have
\begin{align*}
	Q^{-1/4-2\delta} &= \big(Q^{-3/4+2\delta} \big)^{1/3+\frac{32\delta}{9-24\delta}}\\
	&\le \big(q_2^{-3/2}q_3^{-1} \big)^{1/3+\frac{32\delta}{9-24\delta}}\\
	&\le q_2^{-1/2-2\delta}q_3^{-1/3-2\delta}\\
	&\le \textstyle{\frac12} q_2^{-1/2-\delta}q_3^{-1/3-\delta}.
\end{align*}
Hence, by (\ref{7.7}) we find that
$$
	\left|\gamma-\frac{a_3-a_1}{q}\right|\le q_2^{-2-\delta}q_3^{-4/3-\delta}.
$$
However, by the definition of $\Gamma_0$ in Section \ref{S5} this is expressly excluded for large $q$, and so this establishes as promised that (\ref{7.5}) is impossible. This completes the proof of (\ref{7.1}), which gives the conclusion $\theta_3\le \frac34$.

\section{Theorem~\ref{T1.4}: The lower bound}\label{S8}

\noindent Let $\delta>0$ be sufficiently small, and let $k = 2$ or $3$. We show that for all $\gamma\in\mathbb R\setminus \Q$ there are arbitrarily large $Q$ such that
$$
	\sup_{\alpha\in [0,1)} \left|\sum_{Q<n\le 2 Q} e\big(\alpha n + (\alpha +\gamma)n^k\big)\right| \gg Q^{3/4-\delta}.
$$
The continued fraction algorithm for $\gamma$ gives $q$ and $c$ with $q$ arbitrarily large, $(q,c)=1$, and
$$
	|\gamma-c/q|\le q^{-2}.
$$
Note that two successive convergents $c/q$ and $c'/q'$ of the continued fraction satisfy $cq'-c'q=\pm 1$ and so $(q,q')=1$.  Thus either $q$ or $q'$ is odd. For an arbitrary odd convergent $q$ and a fixed small parameter $\delta>0$ set
\begin{align}\label{8.1}
	Q=q^{2/(1+2\delta)}.
\end{align}

Let $a_k$ be any integer with $(q,a_k)=1$, and if $k=3$ assume additionally that $a_3$ is such that $S_{1,3}(q; a_3-c,a_3) \gg q^{1/2 - \del}$. The existence of such an $a_3$ is guaranteed by Lemma~\ref{L4.2}. Take further $\alpha = -\gamma+a_k/q$ and $a_1=a_k-c$, and define $\alpha_k = \alpha+\gamma$, $\alpha_1=\alpha$ and $\beta_j=\alpha_j-a_j/q$ for $j = 1, k$. Thus
\begin{align*}
	\beta_k=\alpha+\gamma-\frac{a_k}{q}=0 \qquad \text{and} \qquad \beta_1=-\gamma+ \frac{c}{q},
\end{align*}
and so in particular
$$
	|\beta_1|\le q^{-2}\le Q^{-1-2\delta}.
$$
We have
\begin{align}\label{8.2}
	f_{1,k}(\alpha,\alpha+\gamma; Q) = q^{-1} S_{1,k}(q;a_1,a_k) I_{1,k}(-\gam+c/q,0; Q) + O\left(Q^{1/3 + \del}\right).
\end{align}
When $k=2$, this is Theorem~8 in \cite{V:09}, and for $k=3$ it is a consequence of the second statement of Theorem~\ref{T1.1}. Indeed, if $d \neq (a_1, q)$ so that $d \llbracket a_1/d\rrbracket \neq a_1$, then $|\alpha-d\llbracket a_1/d \rrbracket/q|>1/(2q)$ and so
$$
	\left|I_{1,3}\left(\alpha-\frac{d\llbracket a_1/d \rrbracket}{q},0;Q\right)\right|\ll q
$$
by Lemma~\ref{L2.4}\eqref{L2.4a}. Moreover, by Lemma~\ref{L2.1}\eqref{L2.1a} we have
$$
	S_{1,3}(q;d\llbracket a_1/d\rrbracket,a_3)\ll q^{2/3+\eps}.
$$
Hence we see from \eqref{8.1} that
\begin{align*}
	\sideset{}{^\dag}\sum_{\substack{d|q\\d \neq (a_1,q)}} S_{1,3}(q;d\llbracket a_1/d\rrbracket,a_3)I_{1,3}\left(\alpha-\frac{d\llbracket a_1/d \rrbracket}{q},0;Q\right) \ll \sum_{d|q} q^{2/3+\eps} \ll Q^{1/3+\delta/2},
\end{align*}
and consequently \eqref{3.1} implies 
 that
\begin{align*}
	&f_{1,3}(\alpha,\alpha+\gamma;Q) -  q^{-1} S_{1,3}(q;a_1,a_3) I_{1,3}(-\gam+c/q,0; Q)\\
	&\qquad = \frac1q \sideset{}{^\dag}\sum_{\substack{d|q \\ d \neq (a_1,q)}} S_{1,3}(q;d\llbracket a_1/d\rrbracket,a_3)I_{1,3}\left(\alpha-\frac{d\llbracket a_1/d \rrbracket}{q},0;Q\right) + O(q^{1/2+\eps})\\
	&\qquad \ll Q^{1/3 + \del/2} + q^{1/2+\eps} \ll Q^{1/3+\del}
\end{align*}
as claimed.

Note that
\begin{align}\label{8.3}
	I_{1,k}(\beta_1,0; Q) = \int_{Q}^{2 Q} e(\beta_1x) \d x = Q e(3 Q\beta_1/2) \frac{\sin(\pi\beta_1Q)}{\pi\beta_1 Q} \gg Q,
\end{align}
where in the last step we used that $Q|\beta_1| =Q|\gamma-c/q |\le Q^{-2\delta}$ and thus
$$
	\frac{\sin(\pi\beta_1 Q)}{\pi\beta_1 Q} \gg 1.
$$
Moreover, since $q$ is odd and $(a_k,q)=1$, we have
\begin{align}\label{8.4}
	q^{-1}|S_{1,k}(q;a_1,a_k)| \gg q^{-1/2-\del} \gg Q^{-1/4-2\del}.
\end{align}
This follows from the classical bound for quadratic Gauss sums in the case $k=2$, and is a consequence of our choice of $a_3$ via Lemma~\ref{L4.2} when $k=3$.
Hence upon combining \eqref{8.2}, \eqref{8.3} and \eqref{8.4}, we see that
$$
	|f_{1,k}(\alpha,\alpha+\gamma; Q)| \gg Q^{3/4-2\delta},
$$
and the theorem follows.

\section{Concluding Remarks}

\noindent In conclusion, we make some remarks about what might be the real truth with regard to estimates for Weyl sums on suitable sets of minor arcs. Consider sums of the kind \eqref{*1.1} with multidegree $\bk = (1, \ldots, k)$, where $\balp$ is `sufficiently random'.
Then we might guess that in this circumstance $f_{\bk}(\balp)$ behaves like a sum of independent random unimodular variables and so the central limit theorem suggests that
\begin{align}\label{eq:C3}
	f_{\bk}(\boldsymbol\alpha) \ll P^{1/2+\varepsilon}.
\end{align}
In fact, the received wisdom states that for \eqref{eq:C3} to be true, it should suffice that $\boldsymbol\alpha$ is such that for any $a_1,\ldots,a_k,q$ satisfying \eqref{1.4} with $(q,a_1,\ldots,a_k)=1$ we have
\begin{align}\label{eq:C2}
	q+qP|\alpha_1-a_1/q|+\ldots+ qP^k|\alpha_k-a_k/q|>P.
\end{align}

In the one-dimensional case, $\bk = k$, the last author and Wooley \cite{VW} show that there is a connection between the size of $f_k(\alp)$ and the number of solutions of
$$
	x_1^k+\ldots+x_b^k=y_1^k+\ldots+y_b^k
$$
with $0<x_j,y_j\le P$, and that the values of $|f_k(\alp)|$ will even have a normal distribution on the minor arcs.
For $k=2$ we have more precise bounds (see \cite[Theorem~7)]{V:09}) since we can cover all of $[0,1]^2$ by choosing $q,a_2$ with $(q,a_2)=1$, $|\alpha_2-a_2/q| \le 1/(5qP)$, $q\le 5P$ and then taking $a_1$ with $|\alpha_1-a_1/q|\le 1/(2q)$.  Thus the main interest lies in the case $k\ge 3$.  In order to make a proper comparison with the current state of play we first review what is known.\par

Take $\bk = (1, \ldots, k)$. The Vinogradov mean value theorem (Bourgain, Demeter, Guth \cite{BDG} and Wooley \cite{W:mvmt3,W:nec}) combined with Theorem~5.2 of Vaughan \cite{V:HL} give
\begin{align}\label{eq:C4}
	f_{\bk}(\balp) \ll P^{1+\eps} \left(\frac1q_j+\frac1P+\frac{q_j}{P^j}	\right)^{\textstyle\frac1{k(k-1)}}
\end{align}
when for some $j \ge 2$ there are coprime $q_j$ and $a_j$ such that $|\alpha_j-a_j/q_j|\le q_j^{-2}$.  At least when $q_j+P^j|\alpha_j-a_j| >P^2$ for every $j$ and choice of $q_j$, $a_j$, this is the best that we know when $k\ge 6$, and is perhaps also the best that we know when $3\le k\le 5$ and we need to approximate some (presumably non-zero) $\alpha_j$ with $2\le j\le k-1$.  When we have $q$, $a_k$ with $(q,a_k)=1$ and $|\alpha_k-a_k/q| \le q^{-2}$, Weyl's inequality \cite[Lemma~2.4]{V:HL} gives
\begin{align}\label{eq:C6}
	f_{\bk}(\boldsymbol\alpha) \ll P^{1+\varepsilon} \left(\frac1q+\frac1P+\frac{q}{P^k}\right)^{2^{1-k}}
\end{align}
and this is superior to (\ref{eq:C4}) when $3\le k\le 5$.  \par

There is an underlying problem when dealing with a sum as general as \eqref{*1.1}.  It seems that one ought to consider a general rational approximation to $\boldsymbol\alpha$ of the kind $|q\alpha_j-a_j|\le Q_j^{-1}$ with $(q,a_1,\ldots,a_k)=1$, but then to cover the whole of $[0,1]^k$ one needs that $q$ can be as large as $cQ_1\cdots Q_k$. This means that either $q$ might be much larger than $P^k$ or the intervals about each rational are too large to be able to deduce anything useful.  The alternative, as in the statement of (\ref{eq:C4}) above, is to deal with one $j$ at a time.  If $q_j>P^{\theta_j}$ for some $\theta_j\le 1$ then we are done and that leaves the situation when $q_j\le P^{\theta_j}$ for every $j$.  Now one can take $q= \lcm(q_1,\ldots,q_k)$ and the rational approximation to $\boldsymbol\alpha$ becomes $(a_1q/q_1,\ldots,a_kq/q_k)$.  However, the likelihood is that any useful bound will still need $q$ fairly constrained in terms of $P$ and so the $\theta_j$ will have to be rather small.  An example of this process is given in the proof of  \cite[Theorem~7.4]{V:HL}.  Some aspects of methods to overcome this are described in Chapter 5 of Baker \cite{RB86}.

Whilst our result of Theorem~\ref{T1.4} does not strictly contradict such heuristics as detailed in the opening paragraph of this section, it does raise the question of whether these heuristics might not be too naive in some cases. When $\{p_1, \ldots, p_t\}$ is a set of polynomials with a non-vanishing Wronskian, consider the associated exponential sum
\begin{align}\label{eq.1}
	f_{\bp}(\alp_1, \ldots, \alp_t) = \sum_{1 \le n \le P} e(\alp_1 p_1(n) + \ldots + \alp_t p_t(n)).
\end{align}
We know from standard bounds that $\sup_{\balp} |f_{\bp}(\balp)| = f_{\bp}(\boldsymbol 0) = \lfloor P \rfloor$, whilst on the other hand it follows from \cite[Corollary~2.2]{ChS} that whenever the polynomials $p_j$ have a non-vanishing Wronskian, the bound \eqref{eq:C3} holds for a set of $\balp \in [0,1]^t$ of full measure. Meanwhile, the analogous inequality to  \eqref{eq:C2} is not sufficient for the bound \eqref{eq:C3} to hold. This is evidenced in our Theorem \ref{T1.4} with the choices $p_1(n)=n^k$ and $p_2(n)=n^k+n$, where $k=2$ or $k=3$. Here, it transpires from our arguments that even if $\alp_1$ lacks a good rational approximation, for certain choices of $\alp_2$ the contributions of the degree $k$ part of the polynomial more or less cancel out, leading to a less random behaviour.
It is not clear to the authors whether this behaviour is particular to the presence and influence of the linear term in $p_2$ or whether there is some underlying phenomenon at work.
In the latter scenario, one might now be inclined to guess that if only $r$ of the $t$ coefficients are restricted to lie in a set of full measure whilst the other $t-r$ coefficients are allowed to range over the entire unit interval, that the ensuing bound would interpolate between the two extremes. Thus, one might speculate that when the polynomials $p_j$ are all of the same degree one has the bound
\begin{align*}
	\sup_{\alp_1, \ldots, \alp_r} |f_{\bp}(\balp)| \ll P^{1-r/(2t) + \eps} \qquad \text{for almost all $\alp_{r+1}, \ldots, \alp_t$},
\end{align*}
and that this might in some cases even be sharp (up to epsilon) for a sequence of values $P$ tending to infinity. The exponent here is $1-r/(2t) = (r/2 + (t-r))/t$ and interpolates between $r$ contributions of $1/2$ and $t-r$ contributions of $1$. Whilst this is compatible with the bounds of our Theorem \ref{T1.4}, there is still hope that stronger bounds may be available if the polynomials in question differ by more than a linear term.

Our understanding is better in the one-dimensional case. On page 43 of Vaughan \cite{V:HL} it was stated (we have changed the notation to be consistent with this memoir) that when $(q,a)=1$ and $\beta=\alpha-a/q$ it would be very interesting to decide whether the relation
\begin{align*}
	f_k(\alpha) = q^{-1}S_k(q,a) I_k(\bet) + O\left((q+qP^k|\beta|)^{\theta}\right)
\end{align*}
holds with an exponent $\theta$ smaller than $1/2$, and it was even speculated that $\theta$ might be as small as $1/k$.  This was shown to be false by Daemen \cite{Dae} and Br\"udern and Daemen \cite{BD}. One other result has come to our attention.  Heath-Brown \cite{HB10} has shown on the assumption of the $abc$ conjecture that if $\alpha$ is a quadratic irrational then
\begin{align*}
	\sum_{n\le P} e(\alpha n^3) \ll P^{\frac57+\varepsilon}.
\end{align*}
It may be worthwhile to note that quadratic irrationals are badly approximable numbers so that Heath-Brown's result can be viewed as applying to an `extreme minor arc' situation.
\par

Finally, we briefly outline an  argument, versions of which in other contexts are quite well known, that shows that one cannot expect to bound the exponential sum $f_k(\alpha)$ by anything smaller than $P^{1/2}$ on the minor arcs. Let $P$ be large and choose $R=P^{1+\phi}$ and $Q=P^{k-1-\psi}$, where $\phi$ and $\psi$ are positive numbers at our disposal and $\phi<\psi$ so that $RQ<P^{k-\delta}$ for some substantial $\delta>0$.  There are various wrinkles that could be introduced to enable a quite large $\delta$. 
\par

Let $\mathfrak M$ denote the union of the intervals $[a/q-1/(qQ),a/q+1/(qQ)]$ with $1\le a\le q\le R$ and $(q,a)=1$ and let $\mathfrak m = (1/Q,1+1/Q]\setminus\mathfrak M$.  The total measure of $\mathfrak M$ is $\le 2RQ^{-1}$ and so
$$
	\int_{\mathfrak M} |f_k(\alpha)|^2 \d\alpha \ll P^2RQ^{-1} \ll P^{4+\phi+\psi-k}.
$$
When $k\ge 4$ this is $\ll P^{1-\delta}$ for some $\delta>0$ as long as $\phi+\psi<1-\delta$.  When $k=3$ this argument can be refined by using the approximations for $f_k(\alp)$ given by (4.13), Theorem~4.2 and the integral version of Lemma~2.8 of Vaughan \cite{V:HL}.  Then we obtain the bound
\begin{align*}
\int_{\mathfrak M} |f_k(\alpha)|^2 \d\alpha &\ll	\sum_{q\le R} q \int_0^{1/(qQ)} \left(\frac{P^2}{(q+qP^3\beta)^{2/3}} + q^{\varepsilon}(q+qP^3\beta)\right) \d\beta \\
	&\ll P^{\frac13+\frac{4\phi}3} + P^{\frac13+\phi+\frac{\psi}3} + P^{2\phi+\psi+\varepsilon} + P^{\phi+2\psi+\varepsilon} \ll P^{1-\delta}
\end{align*}
whenever $\phi+\psi<\frac{1}{2}-\delta$. Hence by Parseval's identity, for all $k\ge 3$ we have
\begin{align}\label{eq:C9}
	\int_{\mathfrak m} |f_k(\alpha)|^2 \d\alpha = P+ O(P^{1-\delta}),
\end{align}
which gives the desired conclusion. Similar arguments may easily be implemented for multidimensional exponential sums, and it is also quite feasible to consider higher moments than the second.

\par
\bigskip 

{\textbf{Acknowledgements.}} This paper emerged as part of a group discussion at the Heilbronn Focused Research Workshop on Decoupling and Efficient Congruencing, University of Bristol, 17th--21st June 2019. The authors are very grateful to the Heilbronn Institute for Mathematical Research, as well as the European Union's Horizon 2020 research and innovation programme via grant agreement No.~695223, for funding this workshop, and to the University of Bristol for its hospitality. The work was further facilitated by a visit of three of the authors to Oberwolfach in November 2019. In addition, JB was supported by Starting Grant no.~2017-05110 of the Swedish Science Foundation (Vetenskapsr{\aa}det), and GS was supported by Simons Investigator Grant no.~376201 in the name of Ben Green.


\begin{thebibliography}{99}
	
	\bibitem[Bak86]{RB86} R. C. Baker, Diophantine inequalities, London Math. Soc. Monographs, New Series, vol. 1.  The Clarendon Press, Oxford, pp. xii+275, 1986.
	
	\bibitem[Ber96]{Ber} M.~Berry, \emph{Quantum fractals in boxes}, J. Phys. A: Math. Gen. \textbf{29} (1996), 6617--6629.
	
	\bibitem[BDG16]{BDG} J. Bourgain, C. Demeter and L. Guth, \emph{Proof of the main conjecture in Vinogradov's mean value theorem for degrees higher than three}, Ann. of Math. (2) \textbf{184} (2016) 633--682.
	
	\bibitem[BrDa09]{BD} J. Br\"udern and D. Daemen, \emph{Imperfect mimesis of Weyl sums}, Int. Math. Res. Not. (IMRN) 2009, No. 16, 3112--3126.
	
	\bibitem[BrRo15]{BR} J. Br\"udern and O. Robert, \emph{Rational points on linear slices of diagonal hypersurfaces}, Nagoya Math. J. \textbf{218} (2015), 51--100.

	\bibitem[ChSh]{ChS} C. Chen and I. E. Shparlinski, \emph{New bounds of Weyl sums}, Int. Math. Res. Not. (IMRN), to appear. 
	
	\bibitem[Dae10]{Dae} D. Daemen, \emph{The asymptotic formula for localized solutions in Waring's problem and approximations to Weyl sums}, Bull. London Math. Soc. \textbf{42} (2010) 75--82.
	
	\bibitem[ErSh19]{ES} M. B. Erdo\u gan and G. Shakan, \emph{Fractal solutions of dispersive partial differential equations on the torus},  Selecta Math. (N.S.) \textbf{25} (2019), no. 1, Art. 11.
	
	\bibitem[ErTz16]{ET} M. B. Erdo\u gan and N. Tzirakis, {\it  Dispersive Partial Differential Equations: Wellposedness and Applications}, London Mathematical Society Student Texts {86}, Cambridge University Press, 2016.
	
	\bibitem[HB10]{HB10} D. R. Heath-Brown, \emph{Bounds for the cubic Weyl sums}, J. Math. Sciences \textbf{171} (2010), 813--823.

	\bibitem[HuWo]{HW} K. Hughes and T. D. Wooley, \emph{Discrete restriction for $(x,x^3)$ and related topics}, arXiv:1911.12262.


	\bibitem[OsCh12]{Os} K. I. Oskolkov and M. A. Chakhkiev, \emph{Traces of the discrete Hilbert transform with quadratic phase} (Russian), Tr. Mat. Inst. Steklova 280 (2013), Ortogonalnye Ryady, Teoriya Priblizheni i Smezhnye Voprosy, 255--269; translation in Proc. Steklov Inst. Math. 280 (2013), no. 1, 248--262.

	\bibitem[Sch76]{WS76} W. M. Schmidt, Equations over finite fields -- An elementary approach. Lecture Notes in Mathematics \textbf{536}, Springer, Berlin, 1976.
	
	\bibitem[Sch80]{WS80} W. M. Schmidt, Diophantine Approximation. Lecture Notes in Mathematics, \textbf{785}, Springer, Berlin, 1980.
	
	\bibitem[Tal36]{Tal} H. F. Talbot, \emph{Facts related to optical science}, No. IV, Philo. Mag. \textbf{9} (1836), 401--407.
	
	\bibitem[Tit86]{Titch} E. C. Titchmarsh, The Theory of the Riemann Zeta-Function. 2nd
	Ed., revised by D. R. Heath-Brown, Oxford University Press, 1986.
	
	\bibitem[Vau97]{V:HL} R. C. Vaughan, The Hardy--Littlewood method. Cambridge Tracts in Mathematics \textbf{125}, Cambridge University Press, 1997.
	
	\bibitem[Vau09]{V:09} R. C. Vaughan, \emph{On generating functions in additive number theory, I}, in Analytic Number Theory, Essays in Honour of Klaus Roth, edited by W. W. L. Chen, W. T. Gowers, H. Halberstam, W. M. Schmidt, R. C. Vaughan, Cambridge University Press, 2009.
	
	\bibitem[VaWo98]{VW} R. C. Vaughan and T. D. Wooley, \emph{On the distribution of generating
	functions}, Bull. London Math. Soc. \textbf{30} (1998), 113--122.

	\bibitem[Woo15]{W:15} T. D. Wooley, \emph{Mean value estimates for odd cubic Weyl sums}, 	Bull. Lond. Math. Soc. \textbf{47} (2015), no. 6, 946--957.

	\bibitem[Woo16a]{W:16} T. D. Wooley, \emph{Perturbations of Weyl Sums}, Int. Math. Res. Not. (IMRN) \textbf{2016} (2016), No. 9, 2632--2646.

	\bibitem[Woo16b]{W:mvmt3} T. D. Wooley, \emph{The cubic case of the main conjecture in Vinogradov's mean value theorem}, Adv. Math. \textbf{294} (2016), 532--561.
	
	\bibitem[Woo19]{W:nec} T. D. Wooley, \emph{Nested efficient congruencing and relatives of Vinogradov's mean value theorem},  Proc. Lond. Math. Soc. (3) \textbf{118} (2019), no. 4, 942--1016.
	
	
	
\end{thebibliography}
\end{document}